\tikzstyle{v} = [circle, draw, inner sep=2pt, minimum size=3pt, fill=black]
\tikzset{square matrix/.style={
    matrix of nodes,
    column sep=-\pgflinewidth, row sep=-\pgflinewidth,
    nodes={draw,
      minimum height=4.5pt,
      anchor=center,
      text width=4.5pt,
      align=center,
      inner sep=0pt
    },
  },
  square matrix/.default=1.2cm
}
\newtheorem{Theorem}{Theorem}[section]
\newtheorem{Definition}[Theorem]{Definition}
\newtheorem{Lemma}[Theorem]{Lemma}
\newtheorem{Proposition}[Theorem]{Proposition}
\newtheorem{Corollary}[Theorem]{Corollary}
\newtheorem{Remark}[Theorem]{Remark}
\newtheorem{Example}[Theorem]{Example}
\DeclareMathOperator{\leaf}{leaf}
\DeclareMathOperator{\diam}{diam}
\begin{document}

\title{Total domination number of middle graphs}

\author[F. Kazemnejad]{Farshad Kazemnejad}
\address{Farshad Kazemnejad, Faculty of Basic Sciences, Department of Mathematics, Ilam University, P.O.Box 69315-516, Ilam, Iran.}
\email{kazemnejad.farshad@gmail.com}
\author[B. Pahlavsay]{Behnaz Pahlavsay}
\address{Behnaz Pahlavsay, Department of Mathematics, Hokkaido University, Kita 10, Nishi 8, Kita-Ku, Sapporo 060-0810, Japan.}
\email{pahlavsayb@gmail.com}
\author[E. Palezzato]{Elisa Palezzato}
\address{Elisa Palezzato, Department of Mathematics, Hokkaido University, Kita 10, Nishi 8, Kita-Ku, Sapporo 060-0810, Japan.}
\email{palezzato@math.sci.hokudai.ac.jp}
\author[M. Torielli]{Michele Torielli}
\address{Michele Torielli, Department of Mathematics, GI-CoRE GSB, Hokkaido University, Kita 10, Nishi 8, Kita-Ku, Sapporo 060-0810, Japan.}
\email{torielli@math.sci.hokudai.ac.jp}

\date{\today}

\begin{abstract}
A \emph{total dominating set} of a graph $G$ with no isolated vertices is a subset $S$ of the vertex set such that every vertex of $G$ is adjacent to a vertex in $S$. The \emph{total domination number} of $G$ is the minimum cardinality of a total dominating set of $G$. 
In this paper, we study the total domination number of middle graphs. Indeed, we obtain tight bounds for this number in terms of the order of the graph $G$. We also compute the total domination number of the middle graph of some known families of graphs explicitly. Moreover, some Nordhaus-Gaddum-like relations are presented for the total domination number of middle graphs.
\\[0.2em]

\noindent
Keywords: Total domination number, Middle graph, Nordhaus-Gaddum-like relation.
\\[0.2em]

\noindent 
\end{abstract}
\maketitle
\section{Introduction}

The concept of total domination in graph theory was first introduced by Cockayne, Dawes and Hedetniemi in \cite{CDH} and it has been studied extensively by many researchers in the last years, see for example \cite{HHS5}, \cite{HHS6}, \cite{HeYe13}, \cite{3totdominrook}, \cite{Kaz19}, \cite{totcolordominmiddle}, 
\cite{romandomin} and \cite{dominLatin}. The literature on this subject has been surveyed and detailed in the recent book~\cite{HeYe13}. 
We refer to \cite{bondy2008graph} as a general reference on graph theory.

Let $G$ be a graph with the vertex set $V(G)$ of \emph{order}
$n$ and the edge set $E(G)$ of \emph{size} $m$.
The \emph{open neighborhood} and the \emph{closed neighborhood} of a
vertex $v\in V(G)$ are $N_{G}(v)=\{u\in V(G)~|~ uv\in E(G)\}$ and
$N_{G}[v]=N_{G}(v)\cup \{v\}$, respectively. For a connected graph $G$, the \emph{degree} of a
vertex $v$ is defined as $d_G(v)=\vert N_{G}(v) \vert $. 
The \emph{distance} $d_G(v,w)$ in $G$ of two vertices $v,w\in V(G)$ is the length of the shortest path connecting $v$ and $w$.
The \emph{diameter} $\diam(G)$ of $G$ is the shortest distance between any two vertices in $G$.
A \emph{dominating set} of a graph $G$ is a set $S\subseteq V(G)$ such that  $N_G[v]\cap
S\neq \emptyset$, for any vertex $v\in V(G)$. The \emph{domination number} of $G$ is the minimum cardinality of a dominating set of $G$ and is denoted by $\gamma(G)$. 

\begin{Definition} Let $G$ be a graph with no isolated vertices. A \emph{total dominating set} of $G$ is a set $S\subseteq V(G)$
such that  $N_G(v)\cap
S\neq \emptyset$, for any vertex $v\in V(G)$. The \emph{total domination number} of $G$ is the minimum cardinality of a total dominating set of $G$ and is denoted by $\gamma_t(G)$. 
\end{Definition}
\begin{Example} Consider the path $P_3$ with vertex set $\{v_1,v_2,v_3\}$ and edge set $\{v_1v_2,v_2v_3\}$. Then the set $S=\{v_1,v_2\}$ is a total dominating set of $P_3$.
\end{Example}
For any non-empty $S\subseteq V(G)$, we denote by $G[S]$ the subgraph of $G$ induced on $S$. For any $v\in V(G)$, we denote by $G\setminus v$ the subgraph of $G$ induced on $V(G)\setminus \{v\}$. 

The \emph{complement} $\overline{G}$ of $G$ is a graph with vertex set $V(G)$ such that for every two vertices $v$ and $w$, $vw\in E(\overline{G})$ if and only if $vw\not\in E(G)$. 

The \emph{line graph} of $G$, denoted by $L(G)$, is the graph with vertex set $E(G)$, where vertices $x$ and $y$ are adjacent in $L(G)$ if and only if edges $x$ and $y$ share a common vertex in $G$.

In \cite{HamYos}, the authors introduced the notion of the middle graph $M(G)$ of $G$ as an intersection graph on $V(G)$.

\begin{Definition}
 The \emph{middle graph} $M(G)$ of a graph $G=(V,E)$ is the graph whose vertex set is $V(G)\cup E(G)$ and two vertices $x, y$ in the vertex set of $M(G)$ are adjacent in $M(G)$ in case one the following holds: 
 \begin{enumerate} 
 \item $x, y$ are in $E(G)$ and $x, y$ are adjacent in $G$.
 \item $x$ is in $V (G)$, $y$ is in $E(G)$, and $x, y$ are incident in $G$. 
 \end{enumerate}
 \end{Definition}
 
 \begin{Example} Consider the graph $P_3$, then the middle graph $M(P_3)$ is the one in Figure~\ref{fig:midlepaths}.
 \end{Example}
 
 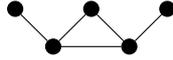
\begin{figure}[th]
\centering
\begin{tikzpicture}
\tikzstyle{v} = [circle, draw, inner sep=2pt, minimum size=3pt, fill=black]
\draw (-1,0) node[v](1){}; 
\draw (-0.5,-0.5) node[v](2){}; 
\draw (0,0) node[v](3){};
\draw (0.5,-0.5)  node[v](4){};
\draw (1,0)  node[v](5){};
\draw (1)--(2)--(3)--(4)--(5);
\draw (2)--(4);
\end{tikzpicture}
\caption{The middle graph $M(P_3)$ }\label{fig:midlepaths}
\end{figure}
 
It is obvious that $M(G)$ contains the line graph $L(G)$ as induced subgraph, and that if $G$ is a graph of order $n$ and size $m$, then $M(G)$ is a graph of order $n+m$ and size $2m+|E(L(G))| $ which is obtained by subdividing each edge of $G$ exactly once and joining all the adjacent edges of $G$ in $ M(G)$. 

In order to avoid confusion throughout the paper, we fix a ``standard'' notation for the vertex set and the edge set of $M(G)$. Assume $V(G)=\{v_1,v_2,\dots, v_n\}$, then we set $V(M(G))=V(G)\cup \mathcal{M}$, where $\mathcal{M}=\{m_{ij}~|~ v_iv_j\in E(G)\}$ and $E(M(G))=\{v_im_{ij},v_jm_{ij}~|~ v_iv_j\in E(G)\}\cup E(L(G)) $.



In this article, we continue our study from \cite{dominmiddle} on domination of middle graphs. The paper proceeds as follows. In Section 2, we describe explicitly the total domination number of the middle graph of several known families of graphs and we present some upper and lower bounds for $\gamma_t(M(G))$ in terms of the order of the graph $G$. In Section 3, we describe bounds for the total domination number of the middle graph of trees. In Section 4, we obtain the same type of results for $\gamma_t(M(G\circ K_1))$, $\gamma_t(M(G\circ P_2))$ and $\gamma_t(M(G+ K_p))$. Finally, in the last Section, we present some Nordhaus-Gaddum like relations for the total domination number of middle graphs.

\section{Middle graph of known graphs and their total domination number}\label{t80}
We start our study on total domination with two key Lemmas.

\begin{Lemma}\label{lemma:totdominationisalledges}
Let $G$ be a connected graph of order $n\ge3$ and $S$ a total dominating set of $M(G)$. Then there exists $S'\subseteq E(G)$ a total dominating set of $M(G)$ with $|S'|\le|S|$.
\end{Lemma}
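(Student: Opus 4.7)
The plan is to argue by induction on $k := |S \cap V(G)|$, showing that if $k \geq 1$ one can replace $S$ by another total dominating set $S^*$ of $M(G)$ with $|S^*| \leq |S|$ and $|S^* \cap V(G)| = k-1$. When $k=0$ we already have $S \subseteq \mathcal{M}$, which we identify with $E(G)$, so we set $S'=S$.

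For the inductive step, I would pick any $v_i \in S \cap V(G)$. Since $S$ totally dominates $M(G)$ and since every neighbour of $v_i$ in $M(G)$ lies in $\mathcal{M}$ (they are exactly the $m_{ij}$ corresponding to the edges of $G$ incident with $v_i$), there must exist some $m_{i\tau}\in S\cap \mathcal{M}$. The plan is then to delete $v_i$ and, if necessary, add a single element of $\mathcal{M}$ to restore total domination.

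When $v_i$ is removed, only vertices of the form $m_{ij}$ could lose a dominator. For $j\neq \tau$, the vertex $m_{ij}$ remains dominated, because $m_{ij}$ and $m_{i\tau}$ share the vertex $v_i$ in $G$ and are therefore adjacent in $M(G)$. Hence the only vertex potentially left undominated is $m_{i\tau}$ itself. To fix this I would add a vertex $z\in \mathcal{M}\setminus \{m_{i\tau}\}$ adjacent to $m_{i\tau}$, chosen as follows: if $\deg_G(v_i)\geq 2$, pick a second neighbour $v_\ell$ of $v_i$ and set $z=m_{i\ell}$; otherwise $v_i$ is a leaf of $G$, and we use the hypotheses $n\geq 3$ and $G$ connected to find a neighbour $v_\ell\neq v_i$ of $v_\tau$, and set $z=m_{\tau\ell}$. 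In either case $z$ is adjacent to $m_{i\tau}$ in $M(G)$, and setting $S^*:=(S\setminus \{v_i\})\cup \{z\}$ yields $|S^*|\leq |S|$ and $|S^*\cap V(G)|=k-1$.

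A short verification confirms that $S^*$ is a total dominating set: every vertex $x\neq m_{i\tau}$ that was dominated in $S$ by $v_i$ is still dominated by $m_{i\tau}\in S^*$; the vertex $m_{i\tau}$ is now dominated by $z$; the vertex $z$ (if not already in $S$) is dominated by $m_{i\tau}\in S^*$; and $v_i$ itself is dominated by $m_{i\tau}\in S^*$. The only delicate point, and the only place the full strength of the hypotheses $n\geq 3$ and $G$ connected is used, is the leaf case: when $\deg_G(v_i)=1$ one cannot take a second $m_{ik}$, so one must look one step further into $G$ and exploit the fact that $v_\tau$ has degree at least two. Iterating the inductive step at most $k$ times then produces the required $S'\subseteq \mathcal{M}$.
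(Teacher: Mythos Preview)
Your argument is correct and follows the same replacement strategy as the paper: iteratively swap each $v_i\in S\cap V(G)$ for an element of $\mathcal{M}$ until no original vertices remain. The paper's version is terser---it drops $v$ outright if every incident edge already lies in $S$, and otherwise swaps $v$ for some incident edge $e\notin S$---whereas you first fix a witness $m_{i\tau}\in S$ dominating $v_i$, observe that after deleting $v_i$ only $m_{i\tau}$ can lose its dominator, and then adjoin a neighbour $z\in\mathcal{M}$ of $m_{i\tau}$. This extra care is not redundant: the paper's rule ``if all edges at $v$ are in $S$, simply delete $v$'' can fail when $v$ is a leaf, since the unique incident edge may then be left with no neighbour in the new set (for example, in $M(P_4)$ with $S=\{v_1,m_{12},m_{34},v_4\}$, deleting $v_1$ leaves $m_{12}$ undominated). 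Your treatment of the leaf case---reaching one step further to an edge at $v_\tau$, which exists by connectivity and $n\ge3$---is exactly what is needed to make the argument watertight, so your proof both matches the paper's approach and patches a small gap in it.
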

\begin{proof} If $S\subseteq E(G)$, then take $S'=S$. On the other hand, assume that there exists $v\in S\cap V(G)$. If all edges adjacent to $v$ are already in $S$, then take $S_1=S\setminus\{v\}$. Otherwise, let $e\in E(G)\setminus S$ be an edge adjacent to $v$. Then consider $S_1=(S\cup\{e\})\setminus\{v\}$. Since $S$ is finite, then this process terminates after a finite number of steps, and hence we obtain the described $S'$.
\end{proof}

\begin{Lemma}\label{lemma:totdominationdeletionvertex}
Let $G$ be a connected graph of order $n\ge2$ and $v\in V(G)$ a vertex not adjacent to any vertex of degree $1$. Then
$$\gamma_t(M(G\setminus v))\le \gamma_t(M(G)) \le \gamma_t(M(G\setminus v))+1.$$ 
\end{Lemma}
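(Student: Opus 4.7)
The plan is to prove the two inequalities separately; the upper bound is direct, while the lower bound requires handling the edges of a dominating set that are incident to $v$.

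For the upper bound $\gamma_t(M(G))\le\gamma_t(M(G\setminus v))+1$, I would start from a minimum total dominating set $S'$ of $M(G\setminus v)$ and extend it by a single element. Fix any $u\in N_G(v)$ and set $S=S'\cup\{m_{uv}\}$; it remains to verify that $S$ is total dominating in $M(G)$. The vertex $v$ is adjacent to $m_{uv}\in S$; every $m_{vw}$ with $w\in N_G(v)\setminus\{u\}$ shares the endpoint $v$ with $m_{uv}$; and all vertices of $M(G\setminus v)$ are dominated by $S'\subseteq S$. The only subtle point is that $m_{uv}$ itself needs a neighbour in $S$: here the non-leaf hypothesis yields $d_G(u)\ge 2$, so that $u$, as a vertex of $M(G\setminus v)$ totally dominated by $S'$, has some $m_{uw'}\in S'$ with $w'\ne v$, and this $m_{uw'}$ shares the endpoint $u$ with $m_{uv}$.

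For the lower bound $\gamma_t(M(G\setminus v))\le\gamma_t(M(G))$, I would take a minimum TDS $S$ of $M(G)$ and invoke Lemma~\ref{lemma:totdominationisalledges} to assume $S\subseteq E(G)$. Write $S_v=S\cap\{m_{uv}:u\in N_G(v)\}$. When $S_v=\emptyset$, a routine check (the adjacencies in $M(G)$ between vertices of $M(G\setminus v)$ agree with those in $M(G\setminus v)$) shows that $S$ is already a total dominating set of $M(G\setminus v)$. When $S_v\ne\emptyset$, my plan is to build $S^*$ by deleting each $m_{uv}\in S_v$ and adding a replacement from $V(M(G\setminus v))$; the non-leaf hypothesis supplies $w_u\in N_G(u)\setminus\{v\}$ for each such $u$, so the natural candidates are the vertex $u$ itself or the edge $m_{uw_u}$.

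The hard part will be verifying that an appropriate replacement scheme simultaneously preserves total domination and does not increase the cardinality. The awkward situations are (i) the replacement edge $m_{uw_u}$ already belonging to $S$, so the exchange is redundant in size but may leave $m_{uw_u}$ without a dominator in $M(G\setminus v)$, and (ii) an edge $m_{ab}\in S$ with $a\in N_G(v)$ whose only neighbour in $S$ was the removed $m_{av}\in S_v$. To handle both, I would refine the choice of $S$ by additionally minimising $|S_v|$ among minimum TDSs inside $E(G)$: a swap argument, exchanging $m_{uv}\in S_v$ for some $m_{uw_u}$ when $u$ has no other edge of $S$ incident to it, shows that at this optimum either every such $u$ has another edge of $S$ at it---in which case $S^*=(S\setminus S_v)\cup\{u:m_{uv}\in S_v\}$ is a TDS of $M(G\setminus v)$ of cardinality $|S|$, with each added vertex $u$ dominating precisely the edges $m_{ab}$ that would otherwise lose their dominator---or else $|S_v|=1$, in which case adding a single edge $m_{uw_u}$ suffices, with correctness following from the observation that every alternative dominator needed for the fragile vertices survives in $S\setminus\{m_{uv}\}$ thanks to the hypothesis $d_G(u)\ge 2$.
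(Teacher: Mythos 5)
Your proposal is correct and takes essentially the same route as the paper: the upper bound by adjoining a single edge $m_{uv}$ to a minimum total dominating set of $M(G\setminus v)$ (using the non-leaf hypothesis to dominate $m_{uv}$ itself), and the lower bound by reducing to $S\subseteq E(G)$ via Lemma~\ref{lemma:totdominationisalledges} and then exchanging each edge of $S$ incident to $v$ for either its other endpoint $u$ or a second edge at $u$. Your extremal refinement (choosing $S$ to minimise $|S_v|$) merely reorganises the paper's edge-by-edge replacement into a cleaner two-case analysis, and the only cosmetic slip is the subcase $S_v=\emptyset$, which cannot occur since $N_{M(G)}(v)$ consists precisely of the edges at $v$.
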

\begin{proof} Let $S$ be a total dominating set of $M(G\setminus v)$. This implies that for every $w\in N_G(v)$, $w\in S$ or there exists an edge of the form $ww_0\in E(G\setminus v)$ such that $ww_0\in S$. As a consequence,  $S\cup\{vw\}$ is a total dominating set of $M(G)$, for any $w\in N_G(v)$, and hence $\gamma_t(M(G)) \le \gamma_t(M(G\setminus v))+1$.

On the other hand, let $S$ be a minimal total dominating set of $M(G)$. By Lemma~\ref{lemma:totdominationisalledges}, we can assume that $S\subseteq E(G)$. 
Consider $S_v=N_{M(G)}(v)\cap S$. Since $S$ is a minimal total dominating set, $|S_v|\ge 1$. Assume $S_v=\{e_1,\dots, e_k\}$. For any $1\le i\le k$, $e_i$ is an edge of $G$ of the form $w_iv$. By the assumption on $v$, $N_{M(G)}(w_1)=\{e_1,e_{11},\dots, e_{1p}\}$ with $p\ge1$, for some $e_{1j}\in E(G\setminus v)$. If $S\cap\{e_{11},\dots, e_{1p}\}\ne\emptyset$, then consider $S_1=(S\setminus e_1)\cup\{w_1\}$, otherwise $S_1=(S\setminus e_1)\cup\{e_{11}\}$. By applying the same construction for each $e_i$, we obtain $S_k$ a total dominating set of $M(G\setminus v)$ with $|S_k|=|S|$, and hence $\gamma_t(M(G\setminus v))\le \gamma_t(M(G))$.
\end{proof}

We are now ready to describe explicitly the total dominating number of the middle graph of several known families of graphs.

\begin{Proposition}\label{prop:mintotdominstar} 
For any star graph $K_{1,n}$ on $n+1$ vertices, with $n\ge 2$, we have  
$$\gamma_t(M(K_{1,n}))=n.$$
\end{Proposition}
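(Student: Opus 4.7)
The plan is to prove both inequalities $\gamma_t(M(K_{1,n}))\le n$ and $\gamma_t(M(K_{1,n}))\ge n$ by exploiting the very rigid structure of $M(K_{1,n})$. Let $v_0$ denote the center of $K_{1,n}$ and $v_1,\dots,v_n$ its leaves, so that the edge set is $\{v_0v_i\mid 1\le i\le n\}$ and the corresponding vertices in $\mathcal{M}$ are $m_{01},\dots,m_{0n}$. Since any two edges of $K_{1,n}$ share the vertex $v_0$, the induced subgraph of $M(K_{1,n})$ on $\mathcal{M}$ is the complete graph $K_n$; moreover each $m_{0i}$ is adjacent to $v_0$ and to $v_i$, and these are the only edges.

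For the upper bound, I would exhibit the explicit candidate $S=\mathcal{M}=\{m_{01},\dots,m_{0n}\}$. Since $n\ge 2$, every $m_{0i}\in S$ has a neighbour $m_{0j}\in S$ (with $j\ne i$) inside the clique on $\mathcal{M}$; the center $v_0$ is adjacent to all of $m_{01},\dots,m_{0n}$; and each leaf $v_i$ is adjacent to $m_{0i}\in S$. Hence $S$ totally dominates $M(K_{1,n})$ and $\gamma_t(M(K_{1,n}))\le n$.

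For the lower bound, I would invoke Lemma~\ref{lemma:totdominationisalledges}, which is applicable because $K_{1,n}$ has order $n+1\ge 3$. It provides a minimum total dominating set $S'$ of $M(K_{1,n})$ contained entirely in $\mathcal{M}$. The crucial observation is that for each leaf $v_i$, the open neighbourhood in $M(K_{1,n})$ satisfies $N_{M(K_{1,n})}(v_i)=\{m_{0i}\}$. Therefore, to totally dominate $v_i$, the set $S'$ must contain $m_{0i}$. Since this forces $m_{0i}\in S'$ for every $i=1,\dots,n$, we obtain $|S'|\ge n$, and hence $\gamma_t(M(K_{1,n}))\ge n$.

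I do not foresee any real obstacle: the proof is essentially a two-line argument, with the main content being the identification of the forced vertices via the fact that leaves of $G$ become degree-one vertices of $M(G)$. The only point that warrants a sentence of care is checking the hypothesis $n\ge 3$ of Lemma~\ref{lemma:totdominationisalledges}, which translates to $n+1\ge 3$, i.e., $n\ge 2$ as assumed.
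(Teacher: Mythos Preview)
Your proof is correct. The upper bound via $S=\mathcal{M}$ is exactly what the paper does. For the lower bound, however, the paper takes a different route: it simply cites the equality $\gamma(M(K_{1,n}))=n$ from \cite[Proposition~3.1]{dominmiddle} and uses the trivial inequality $\gamma(M(K_{1,n}))\le\gamma_t(M(K_{1,n}))$. Your argument is instead self-contained: you force $m_{0i}\in S'$ for every $i$ from the fact that $N_{M(K_{1,n})}(v_i)=\{m_{0i}\}$. This is arguably cleaner, since it avoids an external reference, and in fact your invocation of Lemma~\ref{lemma:totdominationisalledges} is not even needed: the observation $N_{M(K_{1,n})}(v_i)=\{m_{0i}\}$ already forces $m_{0i}$ into \emph{every} total dominating set, not just those contained in $\mathcal{M}$.
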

\begin{proof} To fix the notation, assume $V(K_{1,n})=\{v_0,v_1,\dots, v_n\}$ and $E(K_{1,n})=\{v_0v_1,v_0v_2,\dots, v_0v_n\}$. Then $V(M(K_{1,n}))=V(K_{1,n})\cup \mathcal{M}$, where $\mathcal{M}=\{ m_i~|~1\leq i \leq n \}$.

If $S=\mathcal{M}$, then $S$ is a total dominating set of $M(K_{1,n})$ with $|S|=n$, and hence $\gamma_t(M(K_{1,n}))\le n$. On the other hand, using \cite[Proposition 3.1]{dominmiddle}, $n=\gamma(M(K_{1,n}))\le \gamma_t(M(K_{1,n}))$.
\end{proof}

\begin{Definition}
A double star graph $S_{1,n,n}$ is obtained from the star graph $K_{1,n}$ by replacing every edge with a path of length $2$. 
\end{Definition}

\begin{Proposition}\label{prop:mintotdomindoublestar}
For any double star graph $S_{1,n,n}$ on $2n+1$ vertices, with $n\ge 1$, we have $$\gamma_t(M(S_{1,n,n}))=2n.$$
\end{Proposition}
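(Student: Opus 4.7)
The plan is to mimic the structure of Proposition~\ref{prop:mintotdominstar}, but now the subdivision doubles both the number of edges and the size of the necessary dominating set. First I fix notation: write $V(S_{1,n,n})=\{v_0,u_1,\dots,u_n,v_1,\dots,v_n\}$, where $v_0u_i$ and $u_iv_i$ are the edges obtained by subdividing each edge of $K_{1,n}$ at the vertex $u_i$. Accordingly, in $M(S_{1,n,n})$ I denote $a_i=m_{v_0u_i}$ and $b_i=m_{u_iv_i}$, so that $V(M(S_{1,n,n}))=V(S_{1,n,n})\cup\{a_1,\dots,a_n,b_1,\dots,b_n\}$. Reading off the definition of the middle graph, the adjacencies are $v_0\sim a_i$, $u_i\sim a_i$, $u_i\sim b_i$, $v_i\sim b_i$, together with $a_i\sim b_i$, and $a_i\sim a_j$ for $i\ne j$ (coming from $L(S_{1,n,n})$).

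For the upper bound, I would take $S=\{a_1,\dots,a_n,b_1,\dots,b_n\}$, which has cardinality $2n$. A routine vertex-by-vertex check verifies total domination: each $v_i$ is dominated by $b_i$, each $u_i$ by $a_i$ (or $b_i$), the center $v_0$ by any $a_i$, each $a_i$ by $b_i$ (and by any $a_j$), and each $b_i$ by $a_i$. Hence $\gamma_t(M(S_{1,n,n}))\le 2n$.

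For the lower bound, let $S$ be any total dominating set of $M(S_{1,n,n})$. The key observation is that the leaf $v_i$ of $S_{1,n,n}$ has $N_{M(S_{1,n,n})}(v_i)=\{b_i\}$, which forces $b_i\in S$ for every $i$; this already contributes $n$ vertices. Since $S$ is \emph{total} dominating, each $b_i$ itself must also have a neighbor in $S$, and $N_{M(S_{1,n,n})}(b_i)=\{u_i,v_i,a_i\}$. Because the sets $\{u_i,v_i,a_i\}$ are pairwise disjoint as $i$ ranges over $\{1,\dots,n\}$ and are disjoint from $\{b_1,\dots,b_n\}$, these constraints contribute $n$ further, distinct vertices to $S$. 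Therefore $|S|\ge 2n$.

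I do not expect a real obstacle here, since once the structure of $M(S_{1,n,n})$ is laid out the combinatorics is forced: the pendant vertices $v_i$ pin down the $b_i$'s, and the disjointness of the neighborhoods $N(b_i)$ prevents any economy in totally dominating them. The only small subtlety is remembering that in the definition of total domination even the vertices of $S$ require a neighbor in $S$, which is precisely what drives the second half of the counting argument (and is why the bound doubles from $n$ in Proposition~\ref{prop:mintotdominstar} to $2n$ here).
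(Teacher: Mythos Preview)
Your proof is correct and follows essentially the same strategy as the paper: the upper bound uses the same set $S=\mathcal{M}$, and the lower bound starts identically by observing that each pendant $v_i$ forces $b_i\in S$, then uses total domination of the $b_i$'s to extract $n$ further vertices. The only variation is in this second step: the paper first invokes Lemma~\ref{lemma:totdominationisalledges} to assume $S\subseteq\mathcal{M}$, whereupon the sole edge-neighbour of $b_i$ is $a_i$, forcing $a_i\in S$; you instead avoid that lemma by noting directly that the neighbourhoods $N(b_i)=\{u_i,v_i,a_i\}$ are pairwise disjoint and disjoint from $\{b_1,\dots,b_n\}$. Your route is marginally more self-contained, the paper's pins down the minimum total dominating set uniquely; either way the count is the same.
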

\begin{proof} 
To fix the notation, assume $V(S_{1,n,n})=\{v_0,v_1,\dots, v_{2n}\}$ and $E(S_{1,n,n})=\{v_0v_i,v_iv_{n+i}~|~1\leq i \leq n\}$. Then $V(M(S_{1,n,n}))=V(S_{1,n,n})\cup \mathcal{M}$, where $\mathcal{M}=\{ m_{i},m_{i(n+i)}~|~1\leq i \leq n \}$. 

If $S=\mathcal{M}$, then $S$ is a total dominating set of $M(S_{1,n,n})$ with $|S|=2n$, and hence $\gamma_t(M(S_{1,n,n}))\le 2n$. 

On the other hand, let $S$ be a total dominating set $M(S_{1,n,n})$. By Lemma~\ref{lemma:totdominationisalledges}, we can assume that $S\subseteq\mathcal{M}$. Since, for every $1\le i\le n$, $N_{M(S_{1,n,n})}(v_{n+i})=\{m_{i(n+i)}\}$, then $m_{i(n+i)}\in S$ for every $1\le i\le n$. Similarly, for every $1\le i\le n$, $N_{M(S_{1,n,n})}(m_{i(n+i)})=\{m_i,v_i,v_{n+i}\}$ implies that $m_i\in S$ for every $1\le i\le n$, and hence $\mathcal{M}\subseteq S$. This implies that $\gamma_t(M(S_{1,n,n}))\ge2n$.
\end{proof}

\begin{Proposition}\label{prop:mintotdominpath}
\label{gamma_{t}(M(P_n))}
For any path $P_n$ of order $n\geq 3$, $$\gamma_{t}(M(P_n))=\lceil \frac{2n}{3} \rceil.$$
\end{Proposition}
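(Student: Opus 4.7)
The plan is to establish matching upper and lower bounds of $\lceil 2n/3 \rceil$. Writing $n = 3q + r$ with $r \in \{0,1,2\}$, for the upper bound I would exhibit the explicit set $S = \{m_{3j-2}, m_{3j-1} : 1 \le j \le q\} \cup T_r$, where $T_0 = \emptyset$, $T_1 = \{m_{3q}\}$, and $T_2 = \{m_{3q}, m_{3q+1}\}$. A routine verification shows that every $v_i$ has a neighbor in $S$ (the $j$-th pair covers $v_{3j-2}, v_{3j-1}, v_{3j}$, and the tail handles the remaining $r$ vertices) and that each selected $m_i$ has a selected neighbor (consecutive $m$'s are adjacent in $M(P_n)$), while $|S| = 2q + r = \lceil 2n/3 \rceil$.

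For the lower bound, let $S$ be a minimum total dominating set of $M(P_n)$. By Lemma~\ref{lemma:totdominationisalledges} I may assume $S \subseteq \mathcal{M}$, and identify $S$ with $S' = \{i : m_i \in S\} \subseteq \{1,\dots,n-1\}$. Three observations constrain $S'$: (i) since $v_1$'s only neighbor in $M(P_n)$ is $m_1$ and $v_n$'s is $m_{n-1}$, both $1$ and $n-1$ lie in $S'$; (ii) domination of each inner $v_i$ forces $\{i-1,i\} \cap S' \ne \emptyset$, so $S'$ contains no two consecutive missing indices; (iii) since $S \subseteq \mathcal{M}$, total domination of each $m_i \in S$ forces $\{i-1,i+1\} \cap S' \ne \emptyset$, so no index of $S'$ is isolated.

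Decomposing $S'$ into maximal runs of consecutive integers, (iii) makes each run have length $\ge 2$, (ii) forces any two adjacent runs to be separated by exactly one gap, and (i) rules out gaps at either endpoint. If there are $k$ runs then $|S'| = (n-1) - (k-1) = n - k$, while $|S'| \ge 2k$ gives $k \le n/3$, hence $|S'| \ge n - \lfloor n/3 \rfloor = \lceil 2n/3 \rceil$. The main obstacle, beyond bookkeeping, is noticing the clean run-decomposition reformulation of the lower bound; once (i)--(iii) are in place, both bounds follow by inspection.
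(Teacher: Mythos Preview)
Your argument is correct. The upper bound is identical to the paper's: you build the same total dominating set of consecutive pairs of edge-vertices, just with the abbreviated notation $m_i$ for $m_{i(i+1)}$. Your lower bound, however, takes a genuinely different route. The paper argues locally: for each $i$ it looks at the induced triple $G_i=P_n[v_i,v_{i+1},v_{i+2}]$ and asserts $|S\cap V(M(G_i))|\ge 2$, then pieces these estimates together to get $\lceil 2n/3\rceil$. You instead first reduce to $S\subseteq\mathcal{M}$ via Lemma~\ref{lemma:totdominationisalledges}, translate the domination constraints into three structural conditions on the index set $S'\subseteq\{1,\dots,n-1\}$ (endpoints present, no gap of length $\ge 2$, no isolated index), and then count via a run decomposition: $k$ runs force $|S'|=n-k$ and $|S'|\ge 2k$, hence $|S'|\ge n-\lfloor n/3\rfloor=\lceil 2n/3\rceil$. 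Your version is longer but entirely self-contained and transparent; the paper's local claim $|S\cap V(M(G_i))|\ge 2$ is terse and in fact requires some care to justify for interior $i$ (and to combine across overlapping $G_i$'s), so your decomposition arguably buys a cleaner and more robust lower bound.
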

\begin{proof}
To fix the notation, assume $V(P_n)=\{v_1,\dots, v_n\}$ and $E(P_n)=\{v_iv_{i+1}~|~1\le i\le n-1\}$. Then $V(M(P_n))=V\cup \mathcal{M}$ where $V=V(P_n)$ and $\mathcal{M}=\{ m_{i(i+1)}~|~1\leq i \leq n-1 \}$. 

If $n \equiv 0 \mod 3$, then consider 
$$S=\{m_{12},m_{23},m_{45},m_{56},\dots, m_{(n-2)(n-1)}, m_{(n-1)n}\}.$$ 
We have that $S$ is a total dominating set of $M(P_n)$ with $|S|=\frac{2n}{3}$. 
If $n \equiv 1 \mod 3$, then consider 
$$S=\{m_{12},m_{23},m_{45},m_{56},\dots, m_{(n-3)(n-2)},m_{(n-2)(n-1)}\}\cup\{m_{(n-1)n}\}.$$ We have that $S$ is a total dominating set of $M(P_n)$ with $|S|=\lceil\frac{2n}{3}\rceil$. 
If $n \equiv 2 \mod 3$, then consider 
$$S=\{m_{12},m_{23},m_{45},m_{56},\dots, m_{(n-4)(n-3)},m_{(n-3)(n-2)}\}\cup\{m_{(n-2)(n-1)},m_{(n-1)n}\}.$$ 
We have that $S$ is a total dominating set of $M(P_n)$ with $|S|=\lceil\frac{2n}{3}\rceil$.
This implies $\gamma_{t}(M(P_n))\le\lceil \frac{2n}{3} \rceil.$

On the other hand, let $S$ be a total dominating set for $M(P_n)$. For every $i=1,\dots, n-2$, let $G_i=P_n[v_i,v_{i+1},v_{i+2}]$. Since $S$ dominates all vertices of the graph $M(G_i)$, $|S\cap V(M(G_i))|\ge 2$. This implies that $|S|\ge \lceil\frac{2n}{3}\rceil$.
\end{proof}

Since if we delete a vertex from a complete graph $K_{n+1}$ we obtain a graph isomorphic to $K_n$, Lemma~\ref{lemma:totdominationdeletionvertex} gives us the following result.

\begin{Lemma}\label{lemma:ineqtotdomincomplete}
For any $n\ge3$, we have
$$\gamma_t(M(K_n))\le\gamma_t(M(K_{n+1}))\le\gamma_t(M(K_n))+1.$$
\end{Lemma}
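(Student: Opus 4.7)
The plan is to apply Lemma~\ref{lemma:totdominationdeletionvertex} directly to $G = K_{n+1}$ with $v$ any vertex of $K_{n+1}$. First I would verify that the hypotheses of Lemma~\ref{lemma:totdominationdeletionvertex} are met: $K_{n+1}$ is connected and has order $n+1 \ge 4 \ge 2$, and since $n \ge 3$ every vertex of $K_{n+1}$ has degree $n \ge 3$, so in particular $K_{n+1}$ contains no vertex of degree $1$. Thus any chosen $v$ is vacuously not adjacent to any vertex of degree $1$.

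Next I would observe that $K_{n+1} \setminus v$ is isomorphic to $K_n$, so $M(K_{n+1}\setminus v) \cong M(K_n)$ and therefore $\gamma_t(M(K_{n+1}\setminus v)) = \gamma_t(M(K_n))$. Substituting this identification into the inequality provided by Lemma~\ref{lemma:totdominationdeletionvertex} yields
$$\gamma_t(M(K_n)) \le \gamma_t(M(K_{n+1})) \le \gamma_t(M(K_n))+1,$$
which is exactly the claim.

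There is essentially no obstacle here: the statement is packaged as a direct corollary of the deletion lemma, and the only thing one needs to notice is that the degree-$1$ hypothesis is automatically satisfied by complete graphs $K_{n+1}$ with $n \ge 3$. This is why the bound $n \ge 3$ appears in the statement: it ensures both that Lemma~\ref{lemma:totdominationdeletionvertex} applies to $K_{n+1}$ and that $K_n$ itself has no isolated vertices so that $\gamma_t(M(K_n))$ is well-defined.
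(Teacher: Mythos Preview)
Your proof is correct and matches the paper's approach exactly: the paper simply notes that deleting a vertex from $K_{n+1}$ yields $K_n$ and then invokes Lemma~\ref{lemma:totdominationdeletionvertex}, which is precisely what you do (with the added care of checking the degree-$1$ hypothesis explicitly).
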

%
%

\begin{Proposition}\label{prop:mintotdomincompletegr}
Let $K_n$ be the complete graph on $n\ge2$ vertices. Then 
$$\gamma_t(M(K_n))= \lceil \frac{2n}{3} \rceil
$$
\end{Proposition}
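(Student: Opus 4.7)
The plan is to reformulate $\gamma_t(M(K_n))$ as an edge-optimization problem on $K_n$, via Lemma~\ref{lemma:totdominationisalledges}. First I would dispose of the boundary case $n=2$ by direct inspection, since $M(K_2)\cong P_3$ and hence $\gamma_t(M(K_2))=2=\lceil 4/3\rceil$.

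For $n\ge 3$, Lemma~\ref{lemma:totdominationisalledges} allows me to assume that a minimum total dominating set $S$ of $M(K_n)$ is contained in $\mathcal{M}$, and I would then identify $S$ with a subset of $E(K_n)$. Unpacking the total domination condition using the neighborhood structure of $M(K_n)$ yields two requirements on $S$: (i) every $v_i\in V(K_n)$ is incident to some edge of $S$, i.e.\ $S$ is an edge cover of $K_n$; and (ii) for every $m_{ij}\in S$, some other edge of $S$ is incident to $i$ or to $j$. The central observation is that, once (i) holds, every $m_{ij}\notin S$ is automatically dominated, and conditions (i) and (ii) together are equivalent to requiring that every connected component of the spanning subgraph $(V(K_n),S)$ of $K_n$ has at least three vertices.

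With this reformulation in place, the rest is a clean counting argument. For the lower bound, if $(V(K_n),S)$ has components of sizes $n_1,\dots,n_k$ with each $n_i\ge 3$ and $\sum_i n_i=n$, then $|S|\ge\sum_i(n_i-1)=n-k$ and $k\le\lfloor n/3\rfloor$, giving $|S|\ge\lceil 2n/3\rceil$. For the upper bound I would partition $V(K_n)$ into $\lfloor n/3\rfloor$ blocks of sizes in $\{3,4,5\}$ (which is possible for every $n\ge 3$) and take a spanning path inside each block; this produces a valid total dominating set of size exactly $\lceil 2n/3\rceil$.

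The main obstacle is rigorously verifying the equivalence in the middle step, namely that condition (ii) is precisely the absence of $K_2$-components in $(V(K_n),S)$: an isolated edge $m_{ij}\in S$ with $d_S(i)=d_S(j)=1$ has no neighbor in $S\cap\mathcal{M}$, whereas in any component on at least three vertices each edge has at least one endpoint of degree $\ge 2$, so condition (ii) is met.
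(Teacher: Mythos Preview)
Your argument is correct and takes a genuinely different route from the paper's proof. The paper obtains the upper bound by embedding a Hamiltonian path $P_n$ in $K_n$ and invoking Proposition~\ref{gamma_{t}(M(P_n))}, and it obtains the lower bound by induction via Lemma~\ref{lemma:ineqtotdomincomplete}, the key step being to show $\gamma_t(M(K_n))<\gamma_t(M(K_{n+2}))$ by explicitly pruning two vertices from a minimum total dominating set. Your approach instead reformulates, via Lemma~\ref{lemma:totdominationisalledges}, the problem as choosing $S\subseteq E(K_n)$ so that the spanning subgraph $(V(K_n),S)$ has every component of order at least~$3$; both bounds then fall out of the elementary count $|S|\ge n-k$ with $k\le\lfloor n/3\rfloor$. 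This is cleaner and more structural than the paper's induction, and the characterization you isolate (total dominating sets $S\subseteq\mathcal{M}$ of $M(G)$ correspond to spanning subgraphs with no components of order $\le 2$) is a statement of independent interest. The paper's approach, by contrast, reuses machinery (Propositions~\ref{gamma_{t}(M(P_n))} and Lemma~\ref{lemma:ineqtotdomincomplete}) that is also deployed elsewhere in the article, so it fits more organically into the surrounding narrative. One small point: your upper-bound construction (short disjoint paths on blocks of size $3$, $4$, or $5$) and the paper's (one long path split into consecutive triples) are different packagings of essentially the same idea, so the real divergence is in the lower bound.
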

\begin{proof} 
If $2\le n\le 4$, a direct computation shows that $\gamma_t(M(K_n))= \lceil \frac{2n}{3}\rceil$.
Assume now $n\ge5$. The graph $K_n$ has several subgraphs isomorphic to $P_n$, and hence $M(K_n)$ has subgraphs isomorphic to $M(P_n)$. Fix one of those and consider $S$ a total dominating set of $M(P_n)$. Since $S$ is also a total dominating set for $M(K_n)$, we have $\gamma_t(M(K_n))\le \lceil \frac{2n}{3} \rceil$.

We will prove the opposite inequality by induction. 
Assume that we have equality for $\gamma_t(M(K_n))$ and we want to prove it for $\gamma_t(M(K_{n+1}))$. 
If $n\equiv2\mod 3$, then $n+1\equiv0\mod 3$, and hence, $\gamma_t(M(K_n))=\lceil\frac{2n}{3}\rceil=\lceil \frac{2(n+1)}{3}\rceil$. On the other hand, by Lemma~\ref{lemma:ineqtotdomincomplete}, $\gamma_t(M(K_n))\le\gamma_t(M(K_{n+1}))$. This fact, together with the first part of the proof, implies that $\gamma_t(M(K_{n+1}))=\lceil \frac{2(n+1)}{3}\rceil$. 
If $n\equiv0,1\mod 3$, by Lemma~\ref{lemma:ineqtotdomincomplete} and the first part of the proof, it is enough to show that $\gamma_t(M(K_n))<\gamma_t(M(K_{n+1}))$. As a contradiction, assume that $\gamma_t(M(K_n))=\gamma_t(M(K_{n+1}))$. If $n\equiv0\mod 3$, then $n-1\equiv2\mod 3$, and hence this would implies $\gamma_t(M(K_{n-1}))=\gamma_t(M(K_n))=\gamma_t(M(K_{n+1}))$. Similarly, if $n\equiv1\mod 3$, then $n+1\equiv2\mod 3$, and hence $\gamma_t(M(K_n))=\gamma_t(M(K_{n+1}))=\gamma_t(M(K_{n+2}))$. Hence we need to show that $\gamma_t(M(K_n))<\gamma_t(M(K_{n+2}))$, when $n\ge4$.
Let $S$ be a total dominating set of $M(K_{n+2})$. To fix the notation, assume $V(K_{n+2})=\{v_1,\dots, v_{n+2}\}$ and $V(M(K_{n+2}))=V(K_{n+2})\cup \mathcal{M}$, where $\mathcal{M}=\{ m_{ij}~|~1\le i<j\le n+2\}$. By Lemma~\ref{lemma:totdominationisalledges}, we can assume that $S\subseteq \mathcal{M}$. After possibly relabeling $V(K_{n+2})$, we can assume that $m_{(n+1)(n+2)}\in S$. Since $S$ is a total dominating set of $M(K_{n+2})$, then it contains at least one element of the form $m_{i(n+1)}$ or $m_{i(n+2)}$, for some $i=1,\dots, n$. By construction, $M(K_n)$ is isomorphic to $M(K_{n+2}[v_1,\dots, v_n])$, this implies that, similarly to the proof of Lemma~\ref{lemma:ineqtotdomincomplete}, we can construct $S'$ a total dominating set of $M(K_n)$ by exchanging a vertex of the form $m_{i(n+1)}$ or $m_{i(n+2)}$ with one of the form $m_{ij}$ and just discarding $m_{(n+1)(n+2)}$. This implies that $|S'|<|S|$, and hence $\gamma_t(M(K_n))<\gamma_t(M(K_{n+2}))$.
\end{proof}

\begin{Theorem}\label{theo:lowerboundtotdomin} Let $G$ be any graph of order $n$. Then
$$\lceil \frac{2n}{3} \rceil\le \gamma_t(M(G))\le n-1$$
\end{Theorem}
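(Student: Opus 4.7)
The plan is to prove the two inequalities separately; the upper bound is direct, while the lower bound is where the real work lies.

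For the upper bound $\gamma_t(M(G))\le n-1$, I would take any spanning tree $T$ of $G$ and use the set $S=\{m_{ij}\mid v_iv_j\in E(T)\}$ of size $n-1$ as a candidate total dominating set. The verification is routine: each $v_i\in V(G)$ is dominated by any edge of $T$ incident to it (which exists since $T$ is connected on $n\ge 3$ vertices), and each middle vertex $m_{ij}$ is dominated because either $v_iv_j\notin T$, in which case both its endpoints carry some $T$-edge, or $v_iv_j\in T$ and, since $n\ge 3$, it cannot have both endpoints as leaves of $T$, so some further edge of $T$ shares an endpoint with it.

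For the lower bound $\gamma_t(M(G))\ge\lceil 2n/3\rceil$, my plan is to start from a minimum total dominating set $S$ of $M(G)$ and use Lemma~\ref{lemma:totdominationisalledges} to reduce to the case $S\subseteq\mathcal{M}$. Identifying $S$ with a set of edges of $G$, consider the spanning subgraph $H=(V(G),S)$. The total-domination condition in $M(G)$ then translates into two structural constraints on $H$: first, $H$ has no isolated vertex, since the $M(G)$-neighbors of each $v_i$ all lie in $\mathcal{M}$ and so $v_i$ must be incident to an edge of $S$; second, no connected component of $H$ is a single edge, because for such an edge $e$ the middle vertex $m_e$ would have no neighbor in $S$---its $M(G)$-neighbors inside $\mathcal{M}$ are exactly the middle vertices of edges of $G$ sharing an endpoint with $e$, and $S\subseteq\mathcal{M}$ rules out the two endpoints of $e$. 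Hence each of the $k$ components of $H$ has at least $3$ vertices, so $k\le\lfloor n/3\rfloor$, and since each component is connected on $n_i$ vertices it contributes at least $n_i-1$ edges. Summing yields $|S|\ge n-k\ge n-\lfloor n/3\rfloor=\lceil 2n/3\rceil$.

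The main obstacle is precisely this lower bound. The naive double-counting ``each edge of $S$ covers at most two vertices of $G$'' only yields $|S|\ge n/2$, which is strictly weaker than $\lceil 2n/3\rceil$. The improvement from $n/2$ to $2n/3$ genuinely requires the second structural constraint---that no edge of $S$ is isolated in $H$---so that components of $H$ are forced to have at least three vertices rather than just two, raising the edge-to-vertex density from $1/2$ to $2/3$.
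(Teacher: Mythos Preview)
Your argument is correct. The upper bound is handled the same way as in the paper: take a spanning tree $T$ and use its edge set. The paper phrases this slightly differently---it invokes Lemma~\ref{lemma:totdominationisalledges} to get a minimal total dominating set of $M(T)$ contained in $E(T)$ and then notes $|E(T)|=n-1$---but the content is identical to your explicit verification that $E(T)$ itself totally dominates $M(G)$.

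The lower bound, however, is genuinely different. The paper does not argue directly on $G$; instead it observes that $M(G)$ sits inside $M(K_n)$ in such a way that any total dominating set of $M(G)$ remains one for $M(K_n)$, and then appeals to Proposition~\ref{prop:mintotdomincompletegr}, where $\gamma_t(M(K_n))=\lceil 2n/3\rceil$ is established by an inductive argument using Lemma~\ref{lemma:ineqtotdomincomplete}. Your route bypasses all of this: once $S\subseteq\mathcal{M}$, you read $S$ as the edge set of a spanning subgraph $H$ of $G$, observe that total domination forces $H$ to have no isolated vertices and no $K_2$-components, and then the component count $k\le\lfloor n/3\rfloor$ gives $|S|\ge n-k\ge\lceil 2n/3\rceil$ directly. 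This is more elementary and self-contained; it does not depend on first computing $\gamma_t(M(K_n))$, and in fact your argument reproves that proposition's lower bound as the special case $G=K_n$. The paper's approach, on the other hand, packages the lower bound as a monotonicity statement (adding edges to $G$ can only decrease $\gamma_t(M(G))$), which is a reusable observation in its own right.
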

\begin{proof} From $G$ we can obtain  graph isomorphic to $K_n$ by adding all the necessary edges. This implies that we can see $G$ as a subgraph of $K_n$, and hence $M(G)$ as a subgraph of $M(K_n)$. Since any total dominating set of $M(G)$ is also a total dominating set for $M(K_n)$, this implies that $\gamma_t(M(G))\ge\gamma_t(M(K_n))$. We obtain the left inequality by Proposition~\ref{prop:mintotdomincompletegr}.

 Let $T$ be a spanning tree of $G$ and $S$ a minimal total dominating set of $M(T)$. By Lemma~\ref{lemma:totdominationisalledges}, we can assume that $S\subseteq E(T)$. This implies that $|S|\le|E(T)|=n-1$. By construction, $S$ is also a total dominating set of $M(G)$, and hence $\gamma_t(M(G))\le n-1$.
\end{proof}

\begin{Remark} By Propositions~\ref{prop:mintotdominstar} and \ref{prop:mintotdominpath}, the inequalities of Theorem~\ref{theo:lowerboundtotdomin} are all sharp.
\end{Remark}

\begin{Theorem}\label{prop:graphwithapath}
 If $G$ is a graph with order $n$ and there exists a subgraph of $G$ isomorphic to $P_n$, then  
 $$\gamma_t(M(G)) = \lceil \frac{2n}{3}\rceil. $$
\end{Theorem}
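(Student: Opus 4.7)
The plan is to combine the two directions of the inequality. The lower bound $\gamma_t(M(G)) \ge \lceil 2n/3\rceil$ is already free: it is precisely the left inequality of Theorem~\ref{theo:lowerboundtotdomin}, which holds for every graph of order $n$. So the whole content of the statement is the matching upper bound $\gamma_t(M(G)) \le \lceil 2n/3\rceil$.

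For the upper bound, I would start by fixing a subgraph $P$ of $G$ isomorphic to $P_n$, say on the vertices $v_1,\dots,v_n$ with edges $v_iv_{i+1}$. By Proposition~\ref{prop:mintotdominpath}, $M(P)$ admits a total dominating set $S$ of size $\lceil 2n/3\rceil$, and by (the construction in) Lemma~\ref{lemma:totdominationisalledges} we may assume $S\subseteq E(P)\subseteq E(G)$, so $S$ is a legitimate subset of $V(M(G))$. I would then claim that this same $S$ is already a total dominating set of $M(G)$, which immediately gives $\gamma_t(M(G))\le|S|=\lceil 2n/3\rceil$.

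The verification of the claim is the only substantive step, and it splits cleanly according to the type of vertex of $M(G)$ that needs to be dominated. Vertices of $V(P)=V(G)$ and edge-vertices lying in $E(P)$ are already dominated inside $M(P)$, and since $M(P)$ is an induced subgraph of $M(G)$ on these vertices, domination is preserved in $M(G)$; the same comment shows that $S$ itself stays totally dominated, i.e.\ every element of $S$ still has a neighbor in $S$. The only new vertices to worry about are the ``extra'' edge-vertices $m_{ij}$ with $v_iv_j\in E(G)\setminus E(P)$. For such an $m_{ij}$, note that in $M(G)$ it is adjacent in the line-graph part to every edge of $G$ incident to $v_i$ or $v_j$. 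Because $S$ is a total dominating set of $M(P)$ and $v_i\in V(P)$, there must be some edge $e\in S$ of $P$ incident to $v_i$; this $e$ shares the vertex $v_i$ with $v_iv_j$ in $G$, so $m_{ij}$ and $e$ are adjacent in $M(G)$, and $m_{ij}$ is dominated.

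The main (mild) obstacle is exactly this last point: making sure that the chordal edges not belonging to $P$ pose no problem. The key observation that makes it routine is that total domination in $M(P)$ forces every vertex $v_i$ of the path to be incident to some edge of $S$, and this incidence translates directly into adjacency in the line-graph part of $M(G)$ with any new edge touching $v_i$. Once this is in place, combining the upper bound with Theorem~\ref{theo:lowerboundtotdomin} yields the desired equality.
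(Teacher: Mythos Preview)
Your proof is correct and follows essentially the same approach as the paper's: take a total dominating set of $M(P_n)$, argue it is also a total dominating set of $M(G)$, and combine with Theorem~\ref{theo:lowerboundtotdomin} for the lower bound. The only difference is that you explicitly verify domination of the extra edge-vertices $m_{ij}$ with $v_iv_j\in E(G)\setminus E(P)$, whereas the paper simply asserts that any total dominating set of $M(P_n)$ is one of $M(G)$.
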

\begin{proof} Since $G$ has a subgraph isomorphic to $P_n$, then $M(G)$ has a subgraph isomorphic to $M(P_n)$. Moreover, any total dominating set of $M(P_n)$  is also a total dominating set for $M(G)$. By Proposition~\ref{prop:mintotdominpath}, this implies that $\gamma_t(M(K_n))\le \lceil \frac{2n}{3} \rceil$. We conclude by Theorem~\ref{theo:lowerboundtotdomin}.
\end{proof}

Directly from Theorem~\ref{prop:graphwithapath}, we obtain the following result.
\begin{Corollary}\label{corol:mintotdominfamily} For any $n\ge3$, $$\gamma_t(M(P_n))=\gamma_t(M(C_n))=\gamma_t(M(W_n))=\gamma_t(M(K_n))=\lceil \frac{2n}{3}\rceil.$$
\end{Corollary}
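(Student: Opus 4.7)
The plan is to apply Theorem~\ref{prop:graphwithapath} directly in each of the four cases, so it suffices to exhibit a Hamiltonian path (a subgraph isomorphic to $P_n$) inside each member of the family. First, $P_n$ trivially contains itself as a spanning subgraph, so the equality $\gamma_t(M(P_n))=\lceil 2n/3\rceil$ is immediate (in fact already given by Proposition~\ref{prop:mintotdominpath}).

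Next, for the cycle $C_n$, deleting any single edge yields a spanning path on $n$ vertices, so $C_n$ contains $P_n$ as a subgraph and Theorem~\ref{prop:graphwithapath} applies. For the wheel $W_n$, whose outer rim is a cycle through all rim vertices joined to the hub, one obtains a Hamiltonian path by traversing the rim and attaching the hub at either endpoint (the hub is adjacent to every rim vertex), giving a copy of $P_n$. Finally, $K_n$ contains every graph on $n$ vertices as a subgraph, in particular $P_n$, so Theorem~\ref{prop:graphwithapath} yields $\gamma_t(M(K_n))=\lceil 2n/3\rceil$, which also reconfirms Proposition~\ref{prop:mintotdomincompletegr}.

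I do not expect any serious obstacle: the whole content of the corollary is packaged inside Theorem~\ref{prop:graphwithapath}, and the verification reduces to the trivial observation that each of the four graphs on $n$ vertices is Hamiltonian-traceable. The only mild care needed is to fix the convention for $W_n$ (the wheel on $n$ vertices, formed from $C_{n-1}$ by adding a universal vertex) so that the exhibited path indeed has exactly $n$ vertices, matching the order hypothesis of Theorem~\ref{prop:graphwithapath}.
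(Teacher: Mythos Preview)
Your proposal is correct and matches the paper's approach exactly: the paper states the corollary as following ``directly from Theorem~\ref{prop:graphwithapath}'' with no further argument, and you have simply supplied the routine verification that each of $P_n$, $C_n$, $W_n$, and $K_n$ contains a spanning path. Your note about the convention $|V(W_n)|=n$ is appropriate and consistent with the paper's usage.
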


\begin{Definition} The \emph{friendship} graph $F_n$ of order $2n+1$ is obtained by joining $n$ copies of the cycle graph $C_3$ with a common vertex.
\end{Definition}

\begin{Proposition}\label{prop:mintotdominfriendship}
Let $F_n$ be the friendship graph with $n\ge2$. Then $$\gamma_t(M(F_n))= 2n.$$
\end{Proposition}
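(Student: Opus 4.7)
The plan is to set up notation carefully for the friendship graph, exhibit an explicit total dominating set of size $2n$ for the upper bound, then argue vertex-by-triangle for the lower bound using Lemma~\ref{lemma:totdominationisalledges}.

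First, I fix notation. Write $V(F_n)=\{v_0\}\cup\{u_k,w_k : 1\le k\le n\}$ with $v_0$ the common vertex and $\{v_0,u_k,w_k\}$ inducing the $k$-th triangle. The edges split into the ``spoke'' edges $v_0u_k, v_0w_k$ and the ``outer'' edges $u_kw_k$. Denote the corresponding middle vertices $a_k=m_{0,u_k}$, $b_k=m_{0,w_k}$, $c_k=m_{u_k,w_k}$, so $\mathcal{M}=\bigcup_{k=1}^n\{a_k,b_k,c_k\}$.

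For the upper bound, I would propose $S=\{a_k,c_k : 1\le k\le n\}$, which has cardinality $2n$. The verification is routine: $v_0$ is dominated by every $a_k$; each $u_k$ is dominated by $a_k$ (and $c_k$); each $w_k$ is dominated by $c_k$; every $b_k$ is adjacent to $a_k$ (they share $v_0$) and to $c_k$ (they share $w_k$); and each $a_k,c_k\in S$ has its partner in the same triangle as a neighbor in $S$. Hence $\gamma_t(M(F_n))\le 2n$.

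For the lower bound, let $S$ be a total dominating set of $M(F_n)$. By Lemma~\ref{lemma:totdominationisalledges} we may assume $S\subseteq \mathcal{M}$. The key observation, which I expect to be the main (though mild) obstacle, is to show $|S\cap\{a_k,b_k,c_k\}|\ge 2$ for every $k$, since the triangles share only $v_0$ and hence these three-element sets are disjoint, forcing $|S|\ge 2n$. To prove this claim I would argue by cases on whether $c_k\in S$. If $c_k\notin S$, then since $N_{M(F_n)}(u_k)\cap\mathcal{M}=\{a_k,c_k\}$ and $N_{M(F_n)}(w_k)\cap\mathcal{M}=\{b_k,c_k\}$, totality forces both $a_k$ and $b_k$ into $S$. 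If instead $c_k\in S$, then the totality condition applied to $c_k$ requires one of its neighbors in $S$; the neighbors of $c_k$ inside $\mathcal{M}$ are precisely $a_k$ and $b_k$, so at least one of them lies in $S$. Either way $|S\cap\{a_k,b_k,c_k\}|\ge 2$, which together with disjointness across $k$ yields $\gamma_t(M(F_n))\ge 2n$ and completes the proof.
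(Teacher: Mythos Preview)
Your proof is correct and follows essentially the same strategy as the paper: an explicit total dominating set of size $2n$ for the upper bound and a triangle-by-triangle count for the lower bound. The tactical details differ slightly---the paper's upper-bound set mixes original vertices with middle vertices while yours lies entirely in $\mathcal{M}$, and for the lower bound the paper appeals to $\gamma_t(M(C_3))=2$ together with the fact that the $n$ copies of $M(C_3)$ meet only in $v_0$, whereas you reduce to $S\subseteq\mathcal{M}$ via Lemma~\ref{lemma:totdominationisalledges} and give an explicit case analysis on $c_k$, which makes the disjointness step cleaner.
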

\begin{proof} To fix the notation, assume $V(F_n)=\{v_0,v_1,\dots, v_{2n}\}$ and $E(F_n)=\{v_0v_1,v_0v_2,\dots, v_0v_{2n}\}\cup\{v_1v_2, v_3v_4,\dots,v_{2n-1}v_{2n}\}$. Then $V(M(F_n))=V(F_n)\cup \mathcal{M}$, where $\mathcal{M}=\{ m_i~|~1\leq i \leq 2n \}\cup\{ m_{i(i+1)}~|~1\leq i \leq 2n-1 \text{ and } i \text{ is odd}\}$. 

Consider $S=\{ m_{i(i+1)}~|~1\leq i \leq 2n-1 \text{ and } i \text{ is odd}\}\cup\{v_i~|~i\text{ is odd}\}$. Then $S$ is a total dominating set for $M(F_n)$ with $|S|=2n$, and hence, $\gamma_t(M(F_n))\le 2n$.

On the other hand, since $F_n$ is obtained by joining $n$ copies of $C_3$ at $v_0$, any total dominating set $S$ of $M(F_n)$ induces a total dominating set of $M(C_3)$ as subgraph of $M(F_n)$. By Corollary~\ref{corol:mintotdominfamily}, $\gamma_t(M(C_3))=2$. This fact together with the fact that any two distinct copies of $M(C_3)$ in $M(F_n)$ share only $v_0$ implies that $|S|\ge2n$. This implies that $\gamma_t(M(F_n))\ge 2n$.
\end{proof}

Using Theorem~\ref{theo:lowerboundtotdomin}, we can describe the total domination number of the middle graph of a complete bipartite graph.

\begin{Proposition}\label{prop:mintotdomincompletebipartitegr}
Let $K_{n_1,n_2}$ be the complete bipartite graph with $n_2\geq n_1 \geq 2$. Then 
$$\gamma_t(M(K_{n_1,n_2}))= 
\begin{cases}
n_2+\lceil\frac{2n_1-n_2}{3}\rceil & \text{ if $n_1\le n_2\le 2n_1-1$}\\ 
n_2 & \text{ if $n_2\ge 2n_1$.} 
\end{cases}$$
\end{Proposition}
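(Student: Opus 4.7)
My plan is to translate the problem into a combinatorial optimization on the $n_1\times n_2$ grid $\mathcal{M}$ and then to prove matching lower and upper bounds. Write $V(K_{n_1,n_2})=\{a_1,\dots,a_{n_1}\}\cup\{b_1,\dots,b_{n_2}\}$, so that $\mathcal{M}=\{m_{ij}\mid 1\le i\le n_1,\,1\le j\le n_2\}$. By Lemma~\ref{lemma:totdominationisalledges}, any total dominating set of $M(K_{n_1,n_2})$ can be taken inside $\mathcal{M}$; for such an $S$, set $r_i=|\{j:m_{ij}\in S\}|$ and $c_j=|\{i:m_{ij}\in S\}|$. A direct inspection of the neighborhoods in $M(K_{n_1,n_2})$ shows that $S\subseteq\mathcal{M}$ is a total dominating set if and only if (i) $r_i\ge 1$ for every $i$, (ii) $c_j\ge 1$ for every $j$, and (iii) for every $m_{ij}\in S$ at least one of $r_i\ge 2$ or $c_j\ge 2$ holds.

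For the lower bound, call a row \emph{heavy} when $r_i\ge 2$ and \emph{light} otherwise, and likewise for columns; write $h_r$ and $h_c$ for the numbers of heavy rows and columns. Partitioning the sum $|S|=\sum_i r_i$ according to heavy and light rows gives $|S|\ge 2h_r+(n_1-h_r)=n_1+h_r$, and symmetrically $|S|\ge n_2+h_c$. By (iii) the unique element of $S$ lying in each light row sits in a heavy column, and the unique element in each light column sits in a heavy row; these two families are disjoint subsets of $S$, so $|S|\ge (n_1-h_r)+(n_2-h_c)$. Adding these three inequalities yields $3|S|\ge 2(n_1+n_2)$, hence $|S|\ge\lceil 2(n_1+n_2)/3\rceil=n_2+\lceil(2n_1-n_2)/3\rceil$. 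Combined with the immediate bound $|S|\ge n_2$ coming from (ii), this matches the claim in both regimes, since $\lceil(2n_1-n_2)/3\rceil\le 0$ precisely when $n_2\ge 2n_1$.

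For the upper bound I give explicit constructions. When $n_2\ge 2n_1$, partition $\{1,\dots,n_2\}$ into $n_1$ blocks $J_1,\dots,J_{n_1}$ each of size at least $2$ and take $S=\{m_{ij}:j\in J_i\}$: then $|S|=n_2$, every row is heavy, every column has exactly one element, and all three conditions hold. When $n_1\le n_2\le 2n_1-1$, view $S$ as the edge set of a spanning subgraph of the bipartite graph on $[n_1]\sqcup[n_2]$; conditions (i)--(iii) then say that this subgraph covers every vertex and has no connected component equal to a single edge. A minimum such subgraph is obtained as a disjoint union of bipartite paths: if $n_1+n_2\equiv 0\pmod 3$, use $(2n_1-n_2)/3$ copies of $P_3$ of type row/column/row together with $(2n_2-n_1)/3$ copies of $P_3$ of type column/row/column; if $n_1+n_2\equiv 1\pmod 3$, replace one $P_3$ by a bipartite $P_4$ on two rows and two columns; if $n_1+n_2\equiv 2\pmod 3$, replace one $P_3$ by a bipartite $P_5$ on two rows and three columns. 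A direct count shows that the total number of edges is $\lceil 2(n_1+n_2)/3\rceil$, matching the lower bound.

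The main technical point is the feasibility of the path decomposition in the residue cases: after reserving vertices for the exceptional $P_4$ or $P_5$, the remaining row and column counts $n_1'$ and $n_2'$ must satisfy $2n_1'\ge n_2'$ and $2n_2'\ge n_1'$ in order for the system $2x+y=n_1'$, $x+2y=n_2'$, counting the two types of $P_3$, to have a nonnegative integer solution. A short case analysis on $n_1\bmod 3$ and $n_2\bmod 3$, together with the freedom to orient the exceptional $P_5$ as either two rows and three columns or three rows and two columns, shows the decomposition exists throughout the range $n_1\le n_2\le 2n_1-1$. Combining the matching bounds then yields the formula.
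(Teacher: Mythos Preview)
Your argument is correct and is, in fact, tighter and more self-contained than the paper's own proof. Your reformulation of a total dominating set $S\subseteq\mathcal{M}$ via the row/column conditions (i)--(iii) is exactly right, and the three-inequality trick
\[
|S|\ge n_1+h_r,\qquad |S|\ge n_2+h_c,\qquad |S|\ge (n_1-h_r)+(n_2-h_c)
\]
gives the lower bound $\lceil 2(n_1+n_2)/3\rceil$ directly from the bipartite structure. The paper obtains the same bound by invoking the general inequality $\gamma_t(M(G))\ge\lceil 2|V(G)|/3\rceil$ (Theorem~\ref{theo:lowerboundtotdomin}), whose proof in turn passes through $K_n$; in the intermediate range $n_1+1\le n_2\le 2n_1-1$ the paper then argues somewhat informally that its explicit decomposition $S'\cup S''$ is optimal. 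Your counting argument avoids this and gives a clean, uniform lower bound.

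On the upper bound side, both proofs are explicit constructions, but your framing---viewing $S$ as the edge set of a spanning bipartite subgraph with no $K_2$ component and then tiling by bipartite paths $P_3$ (plus one $P_4$ or $P_5$ according to the residue of $n_1+n_2$)---is more conceptual than the paper's case-by-case listings. The feasibility check you sketch (that after reserving the exceptional path the remaining counts $n_1',n_2'$ still satisfy $n_2'\le 2n_1'$ and $n_1'\le 2n_2'$) does go through in the range $n_1\le n_2\le 2n_1-1$; the only near-boundary cases are $n_2=2n_1-1,2n_1-2$ and very small $n_1$, and one verifies that the residue class and the option of orienting the $P_5$ as $3$ rows~/~$2$ columns handle them all. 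It would strengthen the write-up to spell out at least the extremal case $n_2=2n_1-1$ explicitly, since there $n_1+n_2\equiv 2\pmod 3$ and the $P_5$ with two rows and three columns uses up exactly the slack $2n_1'-n_2'=0$.
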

\begin{proof}
Assume $V(K_{n_1,n_2})=\{v_1,\dots, v_{n_1},u_1,\dots, u_{n_2}\}$ and $E(K_{n_1,n_2})=\{v_iu_j ~|~1\leq i \leq n_1, 1\leq j \leq n_2\}$. Then we have $V(M(K_{n_1,n_2}))=V(K_{n_1,n_2})\cup \mathcal{M}$, where
 $\mathcal{M}=\{ m_{ij}~|~1\leq i \leq n_1, 1\leq j \leq n_2 \}$.
 
 Assume first $n_1=n_2$. If $n_1\equiv0\mod 3$, then consider $$S=\{m_{11},m_{12},m_{23},m_{33},\dots,m_{(n_1-1)n_1},m_{n_1n_1}\}.$$ By construction, $S$ is a total dominating set of $M(K_{n_1,n_2})$ and $|S|=n_1+\frac{n_1}{3}=n_2+\frac{n_1}{3}=n_2+\lceil\frac{2n_1-n_2}{3}\rceil$.
 If $n_1\equiv1\mod 3$, then consider $$S=\{m_{11},m_{12},m_{23},m_{33},\dots,m_{(n_1-2)(n_1-1)},m_{(n_1-1)(n_1-1)}\}\cup\{m_{n_1(n_1-1)},m_{n_1n_1}\}.$$ By construction, $S$ is a total dominating set of $M(K_{n_1,n_2})$ and $|S|=n_1+\lceil\frac{n_1}{3}\rceil=n_2+\lceil\frac{n_1}{3}\rceil=n_2+\lceil\frac{2n_1-n_2}{3}\rceil$. 
 If $n_1\equiv2\mod 3$, then consider $$S=\{m_{11},m_{12},m_{23},m_{33}, \dots, m_{(n_1-1)(n_1-1)},m_{(n_1-1)n_1}\}\cup\{m_{n_1n_1}\}.$$ By construction, $S$ is a total dominating set of $M(K_{n_1,n_2})$ and $|S|=n_1+\lceil\frac{n_1}{3}\rceil=n_2+\lceil\frac{n_1}{3}\rceil=n_2+\lceil\frac{2n_1-n_2}{3}\rceil$.
 
Assume that $n_1+1\le n_2\le 2n_1-1$. Consider $$S'=\{m_{11},m_{1n_1+1},\dots, m_{(n_2-n_1)(n_2-n_1)},m_{(n_2-n_1)n_2}\}.$$ Let $G=K_{n_1,n_2}[u_{n_2-n_1+1},\dots,u_{n_1},v_{n_2-n_1+1},\dots,v_{n_1}]$. Then $G$ is isomorphic to a graph of the form $K_{n,n}$, where $n=2n_1-n_2$. This implies that by the first part of the proof, we can construct $S''$ a total dominating set of $M(G)$ with $|S''|=2n_1-n_2+\lceil\frac{2n_1-n_2}{3}\rceil$. Consider $S=S'\cup S''$. Then $S$ is a total dominating set of $M(K_{n_1,n_2})$ and $|S|=2(n_2-n_1)+2n_1-n_2+\lceil\frac{2n_1-n_2}{3}\rceil= n_2+\lceil\frac{2n_1-n_2}{3}\rceil$.

This implies that if $n_1\le n_2\le 2n_1-1$, then $\gamma_t(M(K_{n_1,n_2}))\le n_2+\lceil\frac{2n_1-n_2}{3}\rceil$.
 
 Assume now that $n_2\ge 2n_1$. Consider $$S=\{m_{11},m_{1n_1+1},\dots, m_{n_1n_1},m_{n_12n_1}\}\cup\{m_{n_12n_1+1},\dots,m_{n_1n_2}\},$$ then $S$ is a total dominating set of $M(K_{n_1,n_2})$ with $|S|=n_2$, and hence, $\gamma_t(M(K_{n_1,n_2}))\le n_2$.
 
On the other hand, assume first $n_1=n_2$. By Theorem~\ref{theo:lowerboundtotdomin}, we have $\gamma_t(M(K_{n_1,n_2}))\ge \lceil\frac{2(n_1+n_2)}{3}\rceil=n_2+\lceil\frac{2n_1-n_2}{3}\rceil.$

Assume that $n_1+1\le n_2\le 2n_1-1$. Let $S$ be a total dominating set of $M(K_{n_1,n_2})$. By Lemma~\ref{lemma:totdominationisalledges}, we can assume that $S\subseteq \mathcal{M}$. The construction of the first part of the proof is optimal since $S'$ and $S''$ have the smallest possible size by the argument discussed when $n_1=n_2$ and $n_2\ge 2n_1$.

This implies that if $n_1\le n_2\le 2n_1-1$, then $\gamma_t(M(K_{n_1,n_2}))= n_2+\lceil\frac{2n_1-n_2}{3}\rceil$.

Assume that $n_2\ge 2n_1$, then by \cite[Proposition 3.13]{dominmiddle}, we have $n_2=\gamma(M(K_{n_1,n_2}))\le \gamma_t(M(K_{n_1,n_2}))\le n_2$. This implies that $\gamma_t(M(K_{n_1,n_2}))=n_2$.

\end{proof}

\section{The middle graph of a tree}

Similarly to \cite[Proposition 2.4]{dominmiddle}, if we consider $T$ a tree and we denote by $\leaf(T)=\{v\in V(T)~|~d_T(v)=1\}$ the set of leaves of $T$, then we have the following result.
\begin{Proposition}\label{prop:mintotdomintreeleaf}
Let $T$ be a tree with $n\ge2$ vertices. Then $$\gamma_t(M(T))\ge |\leaf(T)|.$$
\end{Proposition}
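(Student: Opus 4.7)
The plan is to exploit the fact that each leaf of $T$ becomes a pendant (degree-one) vertex of $M(T)$, whose unique neighbor is therefore forced into every total dominating set. More precisely, fix $\ell \in \leaf(T)$ and let $w_\ell$ denote the unique vertex of $T$ adjacent to $\ell$. By the construction of $M(T)$, the only neighbor of $\ell$ in $M(T)$ is the edge-vertex $m_{\ell w_\ell}$. Hence, for any total dominating set $S$ of $M(T)$, the vertex $\ell$ can only be dominated by $m_{\ell w_\ell}$, so $m_{\ell w_\ell}$ must belong to $S$.

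The second step is to verify that the assignment $\ell \mapsto m_{\ell w_\ell}$ is injective from $\leaf(T)$ into $S$. If two distinct leaves $\ell_1 \neq \ell_2$ produced the same edge-vertex, then the edges $\ell_1 w_{\ell_1}$ and $\ell_2 w_{\ell_2}$ would coincide as edges of $T$, forcing $\ell_1 = w_{\ell_2}$ and $\ell_2 = w_{\ell_1}$. Since each $\ell_i$ has degree one in $T$, this would mean $T$ is the single edge $\ell_1\ell_2$, i.e.\ $n=2$. Consequently, for $n\ge 3$ the map is injective and $|S|\ge|\leaf(T)|$ follows immediately.

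The only real obstacle is therefore the boundary case $n=2$, where $T=K_2$ and the two leaves collapse onto the same forced edge-vertex, so the injectivity argument breaks down. I would dispose of this case by a short direct observation: in $M(K_2)\cong P_3$ the edge-vertex $m_{12}$ is forced into $S$ by either leaf, and since $S$ must itself dominate $m_{12}$, at least one of $v_1,v_2$ must lie in $S$ as well, giving $|S|\ge 2 = |\leaf(K_2)|$. Combining the two cases yields the stated bound for all $n\ge 2$; this boundary check is the only step deserving separate attention, although it is essentially trivial.
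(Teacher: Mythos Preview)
Your proof is correct and follows essentially the same approach as the paper: separate out the base case $n=2$ (where $M(T)\cong P_3$ and $\gamma_t=2$), and for $n\ge 3$ use that distinct leaves force distinct elements into any total dominating set. The only cosmetic difference is that the paper phrases the $n\ge 3$ step via disjointness of the closed neighborhoods $N_{M(T)}[v_i]$ of the leaves, whereas you phrase it via injectivity of the map $\ell\mapsto m_{\ell w_\ell}$; since $N_{M(T)}(\ell)=\{m_{\ell w_\ell}\}$ for a leaf $\ell$, these are the same argument.
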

\begin{proof} To fix the notation, assume $\leaf(T)=\{v_1,\dots,v_k\}$, for some $k\le n$. If $n=2$, then $T$ is isomorphic to $P_2$ and hence $\gamma_t(M(T))=2= |\leaf(T)|$. Assume that $n\ge3$ and let $S$ be a total dominating set of $M(T)$. Then, for each $i=1,\dots, k$, $S\cap N_{M(T)}[v_i]\ne\emptyset$. Since, if $i\ne j$, then $N_{M(T)}[v_j]\cap N_{M(T)}[v_i]=\emptyset$, we have that $|S|\ge k$. This implies that $\gamma_t(M(T))\ge k= |\leaf(T)|$.

\end{proof}

\begin{Remark}
Notice that by Proposition~\ref{prop:mintotdominstar}, the inequality described in Proposition~\ref{prop:mintotdomintreeleaf} is sharp.
\end{Remark}

It is sufficient to add some assumptions on the diameter of a tree $T$, to compute $\gamma_t(M(T))$ explicitly.

\begin{Theorem}\label{theomintotdomintreediam3}
Let $T$ be a tree of order $n\geq 4$ with $\diam(T)=3$. Then 
$$\gamma_t(M(T))=
\begin{cases}
n-2 & \text{ if there are two vertices with $d_T(v)\ge3$}\\ 
n-1 & \text{otherwise.}
\end{cases}$$
\end{Theorem}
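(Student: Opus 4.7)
The plan is to first identify the structure of any tree $T$ of order $n\ge 4$ with $\diam(T)=3$: after fixing a longest path $x_1x_2x_3x_4$, every remaining vertex must be a leaf incident to $x_2$ or $x_3$ (otherwise either the diameter would exceed $3$ or some inner vertex would split off a longer path), so $T$ is a double star with two adjacent non-leaf vertices $u$ and $w$, with $a\ge 1$ pendant leaves $\ell_1,\dots,\ell_a$ at $u$ and $b\ge 1$ pendant leaves $\ell'_1,\dots,\ell'_b$ at $w$. Thus $n=a+b+2$ and $|\leaf(T)|=a+b$, and the hypothesis that $T$ has two vertices of degree at least $3$ translates exactly to $a\ge 2$ and $b\ge 2$.

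For the first case ($a,b\ge 2$), I would take $S=\{m_{u\ell_i}:1\le i\le a\}\cup\{m_{w\ell'_j}:1\le j\le b\}$, of size $a+b=n-2$. A direct verification shows $S$ is a total dominating set of $M(T)$: the hypotheses $a,b\ge 2$ are exactly what is needed so that every $m_{u\ell_i}$ (resp. $m_{w\ell'_j}$) has a fellow member of $S$ in its $M(T)$-neighborhood, and every other vertex of $M(T)$ is adjacent to some element of $S$ (in particular $m_{uw}$ is dominated by any $m_{u\ell_i}$). The matching lower bound $\gamma_t(M(T))\ge a+b$ is immediate from Proposition~\ref{prop:mintotdomintreeleaf}.

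For the second case, assume without loss of generality that $a=1$. I would take $S=\{m_{u\ell_1},m_{uw}\}\cup\{m_{w\ell'_j}:1\le j\le b\}$, a set of size $b+2=n-1$ which a routine check confirms is a total dominating set. For the lower bound, let $S$ be a minimum total dominating set of $M(T)$; by Lemma~\ref{lemma:totdominationisalledges} one may assume $S\subseteq\mathcal{M}$. Since the unique $M(T)$-neighbor of each leaf of $T$ is its incident middle vertex, $S$ must contain $m_{u\ell_1}$ as well as every $m_{w\ell'_j}$. Finally, $m_{u\ell_1}$ itself must be totally dominated by $S$; but $N_{M(T)}(m_{u\ell_1})=\{u,\ell_1,m_{uw}\}$ and the first two elements are not in $\mathcal{M}$, so $m_{uw}$ must lie in $S$, yielding $|S|\ge b+2$.

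The case $a,b\ge 2$ is very quick, since both bounds follow at once from earlier results. The only delicate point is the lower bound in the second case, where Proposition~\ref{prop:mintotdomintreeleaf} alone gives $|\leaf(T)|=b+1=n-2$, one short of the desired $n-1$; the extra element is extracted by observing that the pendant leaf attached to the degree-$2$ center $u$ forces its unique middle-graph neighbor $m_{u\ell_1}$ into $S$, and after reducing to $S\subseteq \mathcal{M}$ via Lemma~\ref{lemma:totdominationisalledges} this vertex can then only be totally dominated through $m_{uw}$.
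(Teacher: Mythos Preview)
Your proof is correct and follows essentially the same approach as the paper: you identify the double-star structure, use the leaf-count lower bound together with the obvious edge set for the case $a,b\ge 2$, and for the lopsided case you reduce to $S\subseteq\mathcal{M}$ via Lemma~\ref{lemma:totdominationisalledges} and force the center edge $m_{uw}$ into $S$. The only cosmetic differences are that the paper invokes Theorem~\ref{theo:lowerboundtotdomin} for the upper bound $n-1$ rather than writing down the set explicitly, and it treats the subcase $a=b=1$ (i.e.\ $T\cong P_4$) separately via Proposition~\ref{prop:mintotdominpath}, whereas your argument absorbs it into the general $a=1$ case.
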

\begin{proof} 
Since by assumption $\diam(T)=3$, then $T$ is a tree which is obtained by joining central vertex $v$ of $K_{1,p}$ and the central vertex $w$ of $K_{1,q}$ where $p+q=n-2$. Let $\leaf(T)=\{v_i~|~1\le i\le n-2\}$ be the set of leaves of $T$. Obviously $V(T)=\leaf(T)\cup \{v,w\}$ and $|\leaf(T)|=n-2$. Define $v_{n-1}=v$ and $v_{n}=w$.

Assume first that $p,q\ge2$, i.e. there are two vertices with $d_T(u)\ge3$. If we consider $S=\{m_{i(n-1)}~|~1\leq i \leq p \}\cup\{m_{in}~|~p+1\leq i \leq n-2\}$, then $S$ is a total dominating set of $M(T)$ with $|S|=n-2$, and hence $\gamma_t(M(T))\le n-2$. On the other hand, by Proposition~\ref{prop:mintotdomintreeleaf}, we have $\gamma_t(M(T))\ge n-2$.

Assume that $p\ge2$ and $q=1$, i.e. there is only one vertex with $d_T(u)\ge3$. Let $S$ be a total dominating set of $M(T)$. By Lemma~\ref{lemma:totdominationisalledges}, we can assume that $S\subseteq E(T)$. Since $N_{M(T)}(v_i)=\{m_{i(n-1)}\}$ for all $1\le i\le p=n-3$ and $N_{M(T)}(v_{n-2})=\{m_{(n-2)n}\}$, then $\{m_{i(n-1)}~|~1\le i\le p\}\cup\{m_{(n-2)n}\}\subseteq S$. Moreover, $N_{M(T)}(m_{(n-2)n})=\{m_{(n-1)n},v_n,v_{n-2}\}$ implies that $m_{(n-1)n}\in S$. This implies that $|S|\ge n-1$, and hence $\gamma_t(M(T))\ge n-1$.
On the other hand, by Theorem~\ref{theo:lowerboundtotdomin}, $\gamma_t(M(T))\le n-1$.

Assume that $p,q=1$, i.e. there are no vertices with $d_T(u)\ge3$. This implies that $T$ is isomorphic to $P_4$ and $n=4$, and hence by Proposition~\ref{prop:mintotdominpath}, $\gamma_t(M(T))=3=n-1$.
\end{proof}

In general, the opposite implication of Theorem~\ref{theomintotdomintreediam3} does not hold as the next example shows.
\begin{Example} Let $T$ be the tree in Figure~\ref{Fig:tree}. Then a direct computation shows that $\diam(T)=4$ and $\gamma_t(M(T))=5=n-2$.

\begin{figure}[th]
\centering
\begin{tikzpicture}
\tikzstyle{v} = [circle, draw, inner sep=2pt, minimum size=3pt, fill=black]
\draw (0,0) node[v](1){}; 
\draw (-0.5,1) node[v](2){}; 
\draw (0.5,1) node[v](3){};
\draw (-1,2)  node[v](4){};
\draw (-0.2,2)  node[v](5){};
\draw (0.2,2)   node[v](6){};  
\draw (1,2)   node[v](7){};  
\draw (1)--(2)--(4);
\draw (1)--(2)--(5);
\draw (1)--(3)--(6);
\draw (1)--(3)--(7);
\end{tikzpicture}
\caption{A tree on $7$ vertices}\label{Fig:tree}
\end{figure}
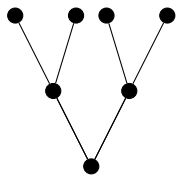


\end{Example}

\begin{Proposition}\label{prop:mintotdomintreediam2}
Let $T$ be a tree of order $n\geq 3$ with $\diam(T)=2$. Then 
$$\gamma_t(M(T))=n-1.$$
\end{Proposition}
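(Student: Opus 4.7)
The plan is to reduce the statement immediately to the star case. A tree $T$ of order $n\ge 3$ with $\diam(T)=2$ must be isomorphic to the star $K_{1,n-1}$: if there were two non-adjacent non-leaf vertices, or a path of length $3$, the diameter would exceed $2$; and since $\diam(T)\ge 2$ forces at least three vertices with a central vertex adjacent to all others, $T$ has a single vertex of degree $n-1$ and all the remaining vertices are leaves attached to it.

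Once this structural observation is made, the result follows directly from Proposition~\ref{prop:mintotdominstar} applied to the star on $n$ vertices (which is $K_{1,n-1}$ in our notation): it gives
$$\gamma_t(M(T)) = \gamma_t(M(K_{1,n-1})) = n-1,$$
where we note that $n-1\ge 2$ since $n\ge 3$, matching the hypothesis of that proposition.

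There is essentially no obstacle here; the only point requiring a brief justification is the claim that diameter $2$ forces $T$ to be a star. This can be argued in one line: pick two vertices $u,v$ realizing the diameter, so $d_T(u,v)=2$ and there is a middle vertex $c$ on the unique $u$--$v$ path. Any other vertex $w$ must be adjacent to $c$, for otherwise the $w$--$u$ or $w$--$v$ path in $T$ would have length at least $3$, contradicting $\diam(T)=2$. Hence every vertex distinct from $c$ is a leaf adjacent to $c$, so $T\cong K_{1,n-1}$.
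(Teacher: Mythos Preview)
Your proof is correct and follows exactly the same approach as the paper: observe that a tree of diameter $2$ is necessarily the star $K_{1,n-1}$ and then invoke Proposition~\ref{prop:mintotdominstar}. The paper states the structural fact without justification, so your extra paragraph explaining why $\diam(T)=2$ forces $T\cong K_{1,n-1}$ is a welcome addition rather than a deviation.
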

\begin{proof} 
Since by assumption $\diam(T)=2$, then $T$ is isomorphic to $K_{1,n-1}$. This implies, by Proposition~\ref{prop:mintotdominstar}, that $\gamma_t(M(T))=n-1$.
\end{proof}

\begin{Remark}By the proof of Theorem~\ref{theomintotdomintreediam3}, differently from the case of domination (see \cite[Theorem 3.2]{dominmiddle}), $\gamma_t(M(G))=n-1$ does not implies that $G$ is isomorphic to $K_{1,n-1}$.
\end{Remark}

\section{Operations on graphs}

In this section, similarly to \cite{dominmiddle}, we study the total domination number of the middle graph of the corona, $2$-corona and join with $K_p$ of a graph.

\begin{Definition}
The \emph{corona} $G\circ K_1$ of a graph $G$ is the graph of order $2|V(G)|$ obtained from $G$ by adding a pendant edge to each vertex of $G$. 
\end{Definition}

 \begin{Example} Consider the graph $P_3$, then the graph $P_3\circ K_1$ is the one in Figure~\ref{fig:pathscorona}.
 \end{Example}
 
 \begin{figure}[th]
\centering
\begin{tikzpicture}
\tikzstyle{v} = [circle, draw, inner sep=2pt, minimum size=3pt, fill=black]
\draw (-1,0) node[v](1){}; 
\draw (-1,-1) node[v](2){}; 
\draw (0,0) node[v](3){};
\draw (0,-1)  node[v](4){};
\draw (1,0)  node[v](5){};
\draw (1,-1)  node[v](6){};
\draw (1)--(3)--(5);
\draw (1)--(2);
\draw (3)--(4);
\draw (5)--(6);
\end{tikzpicture}
\caption{The graph $P_3\circ K_1$ }\label{fig:pathscorona}
\end{figure}
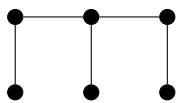

\begin{Theorem}\label{theo:mintotdomincorona}
For any connected graph $G$ of order $n\geq 2$, $$\gamma_t(M(G\circ K_1))=n+\gamma(M(G)).$$
\end{Theorem}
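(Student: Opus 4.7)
The plan is to prove the two inequalities separately, exploiting the structural fact that in $G\circ K_1$ each pendant vertex $u_i$ attached to $v_i$ has a unique neighbor in $M(G\circ K_1)$, namely the subdivision vertex $\ell_i := m_{iu_i}$ of the pendant edge $v_iu_i$. Consequently, every total dominating set of $M(G\circ K_1)$ must contain each $\ell_i$, contributing $n$ forced elements.

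For the upper bound $\gamma_t(M(G\circ K_1))\le n+\gamma(M(G))$, I would fix a dominating set $D$ of $M(G)$ with $|D|=\gamma(M(G))$ and take $S=\{\ell_1,\dots,\ell_n\}\cup D$. A short case check on the four vertex types of $M(G\circ K_1)$ (the pendants $u_i$, the subdivision vertices $\ell_i$, the original vertices $v_i$, and the edge vertices $m_{ij}$) confirms $S$ totally dominates: each $u_i$ is dominated by $\ell_i$, each $v_i$ is dominated by $\ell_i$, and each $m_{ij}$ is dominated by $\ell_i$. The only delicate point is that each $\ell_i$ itself needs a neighbor in $S$; this is guaranteed because $D$ dominates $v_i$ in $M(G)$, so either $v_i\in D$ or some $m_{ij}\in D$, and both of these are neighbors of $\ell_i$ in $M(G\circ K_1)$.

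For the lower bound, let $S$ be a total dominating set of $M(G\circ K_1)$. By Lemma~\ref{lemma:totdominationisalledges}, I may assume $S\subseteq \mathcal{M}_{G\circ K_1}=\mathcal{M}_G\cup\{\ell_1,\dots,\ell_n\}$. The observation above gives $\{\ell_1,\dots,\ell_n\}\subseteq S$, so writing $D=S\cap \mathcal{M}_G$ yields $|S|=n+|D|$. The heart of the argument is to show that $D$ is a dominating set of $M(G)$, whence $|D|\ge \gamma(M(G))$ and the bound follows. Since $\ell_i\in S$ requires a neighbor in $S$, and the neighbors of $\ell_i$ in $M(G\circ K_1)$ are $u_i$, $v_i$, and the $m_{ij}$ with $v_iv_j\in E(G)$, the only admissible choice within $S\subseteq \mathcal{M}_{G\circ K_1}$ is some $m_{i,k_i}\in D$. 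Thus every $v_i\in V(G)$ is dominated by $D$ in $M(G)$. For any $m_{ij}\notin D$, the element $m_{i,k_i}\in D$ must satisfy $k_i\ne j$ (else $m_{ij}\in D$), and then $m_{i,k_i}$ and $m_{ij}$ share $v_i$ and are therefore adjacent in $L(G)\subseteq M(G)$; hence $m_{ij}$ is dominated by $D$ as well.

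The main (though mild) obstacle is exactly this last step: in $M(G\circ K_1)$ the vertices $m_{ij}$ are trivially dominated by the external vertices $\ell_i$, so it is not obvious that the restriction $D$ of $S$ still dominates $M(G)$. The argument is rescued by the fact that the forced membership of each $\ell_i$ in $S$, together with its own need to be totally dominated, compels $D$ to contain at least one edge vertex at every $v_i$ — and this single property propagates to dominate all of $\mathcal{M}_G$ via the line-graph structure.
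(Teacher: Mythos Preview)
Your proposal is correct and follows essentially the same approach as the paper's proof: both invoke Lemma~\ref{lemma:totdominationisalledges} to assume $S\subseteq\mathcal{M}$, observe that each pendant forces $\ell_i\in S$, and then use the total-domination constraint on $\ell_i$ to extract an edge incident with each $v_i$, concluding that $S\cap\mathcal{M}_G$ is a dominating set of $M(G)$. Your write-up is in fact more careful than the paper's in two places --- you explicitly verify that each $\ell_i$ is totally dominated in the upper-bound construction, and you spell out why $D$ dominates the edge-vertices $m_{ij}$ of $M(G)$ via the line-graph adjacency --- whereas the paper simply asserts both steps.
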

\begin{proof} 
To fix the notation, assume $V(G)=\{v_1,\dots, v_n\}$. Then $V(G\circ K_1)= \{v_{1},\dots, v_{2n}\}$ and $E(G\circ K_1)=\{v_1v_{n+1},\dots, v_nv_{2n} \}\cup E(G) $. Then $V(M(G\circ K_1))=V(G\circ K_1)\cup \mathcal{M}$, where $\mathcal{M}=\{ m_{i(n+i)}~|~1\leq i \leq n \}\cup \{ m_{ij}~|~v_iv_j\in  E(G)\}$.

Let $S'$ be a minimal dominating set of $M(G)$. 
By construction, if we consider $S=S'\cup\{m_{i(n+i)}~|~1\leq i \leq n\}$, then $S$ is a total dominating set of $M(G\circ K_1)$ with $|S|=n+\gamma(M(G))$, and hence $\gamma_t(M(G\circ K_1))\le n+\gamma(M(G))$. 

On the other hand, let $S$ be a total dominating set of $M(G\circ K_1)$. By Lemma~\ref{lemma:totdominationisalledges}, we can assume that $S\subseteq \mathcal{M}$. 
Since $N_{M(G\circ K_1)}(v_{n+i})=\{m_{i(n+i)}\}$, for all $1\le i\le n$, then $m_{i(n+i)}\in S$, for all $1\le i\le n$. 
In addition, $N_{M(G\circ K_1)}(m_{i(n+i)})=\{v_i,v_{n+i}\}\cup N_{M(G)}(v_i)$, for all $1\le i\le n$, then $N_{M(G)}(v_i)\cap S\ne\emptyset$, for all $1\le i\le n$. This implies that $S\cap E(G)$ is a dominating set of $M(G)$ and hence $|S|\ge n+\gamma(M(G))$. This implies that $\gamma_t(M(G\circ K_1))\ge n+\gamma(M(G))$. 

\end{proof}

\begin{Definition} The \emph{$2$-corona} $G\circ P_2$ of a graph $G$ is the graph of order $3|V(G)|$ obtained from $G$ by attaching a path of length $2$ to each vertex of $G$ so that the resulting paths are vertex-disjoint.
\end{Definition}

 \begin{Example} Consider the graph $P_3$, then the graph $P_3\circ P_2$ is the one in Figure~\ref{fig:paths2corona}.
 \end{Example}
 
 \begin{figure}[th]
\centering
\begin{tikzpicture}
\tikzstyle{v} = [circle, draw, inner sep=2pt, minimum size=3pt, fill=black]
\draw (-1,0) node[v](1){}; 
\draw (-1,-0.5) node[v](2){}; 
\draw (-1,-1) node[v](3){}; 
\draw (0,0) node[v](4){};
\draw (0,-0.5) node[v](5){}; 
\draw (0,-1)  node[v](6){};
\draw (1,0)  node[v](7){};
\draw (1,-0.5) node[v](8){}; 
\draw (1,-1)  node[v](9){};
\draw (1)--(4)--(7);
\draw (1)--(2)--(3);
\draw (4)--(5)--(6);
\draw (7)--(8)--(9);
\end{tikzpicture}
\caption{The graph $P_3\circ P_2$ }\label{fig:paths2corona}
\end{figure}
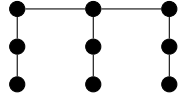

\begin{Theorem}\label{theomintotdomin2corona}
For any connected graph $G$ of order $n\geq 2$, $$\gamma_t(M(G\circ P_2))=2n.$$
\end{Theorem}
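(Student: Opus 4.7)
The plan is to prove both inequalities $\gamma_t(M(G\circ P_2))\le 2n$ and $\gamma_t(M(G\circ P_2))\ge 2n$ by constructing an explicit total dominating set of size $2n$ and then using Lemma~\ref{lemma:totdominationisalledges} together with a forcing argument on the pendant paths.

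First, I fix notation. Write $V(G)=\{v_1,\dots,v_n\}$, and for each $1\le i\le n$ let $u_i,w_i$ be the two new vertices on the path of length $2$ attached to $v_i$, so that $v_iu_i,u_iw_i\in E(G\circ P_2)$. In $M(G\circ P_2)$ I denote by $m_i$ the vertex corresponding to the edge $v_iu_i$, by $m'_i$ the one corresponding to $u_iw_i$, and by $m_{ij}$ the one corresponding to $v_iv_j\in E(G)$.

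For the upper bound, I take $S=\{m_i,m'_i~|~1\le i\le n\}$, which has cardinality $2n$. I check the total domination condition vertex by vertex: $w_i$ is adjacent to $m'_i\in S$; $u_i$ is adjacent to both $m_i,m'_i\in S$; $v_i$ is adjacent to $m_i\in S$; each $m_{ij}$ is adjacent to $m_i,m_j\in S$ in $L(G\circ P_2)$; and each $m_i$ (resp.\ $m'_i$) in $S$ is dominated by $m'_i$ (resp.\ $m_i$), since the edges $v_iu_i$ and $u_iw_i$ share the vertex $u_i$. Thus $S$ is a total dominating set and $\gamma_t(M(G\circ P_2))\le 2n$.

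For the lower bound, let $S$ be any total dominating set of $M(G\circ P_2)$. By Lemma~\ref{lemma:totdominationisalledges} I may assume $S\subseteq\mathcal{M}$, the set of edge-vertices. Now $N_{M(G\circ P_2)}(w_i)=\{m'_i\}$ for every $1\le i\le n$, which forces $m'_i\in S$. Then $m'_i\in S$ must itself be dominated by some vertex of $S$; the neighborhood $N_{M(G\circ P_2)}(m'_i)=\{u_i,w_i,m_i\}$ contains only $m_i$ among $\mathcal{M}$, so the restriction $S\subseteq\mathcal{M}$ forces $m_i\in S$. Hence $\{m_i,m'_i~|~1\le i\le n\}\subseteq S$, giving $|S|\ge 2n$.

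There is no real obstacle here: the argument is a short forcing sequence enabled by the degree-one vertices $w_i$ and by Lemma~\ref{lemma:totdominationisalledges}, which reduces everything to edge-vertices. The only thing to be careful about is the step that forces $m_i\in S$, where it is essential that $S$ be assumed to lie inside $\mathcal{M}$, since otherwise $u_i$ would be an allowed alternative dominator of $m'_i$.
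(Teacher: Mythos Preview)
Your proof is correct and essentially identical to the paper's: the same explicit set $\{m_i,m'_i\}$ gives the upper bound, and the lower bound is the same two-step forcing argument (first $m'_i\in S$ from the leaf $w_i$, then $m_i\in S$ from dominating $m'_i$) after reducing to $S\subseteq\mathcal{M}$ via Lemma~\ref{lemma:totdominationisalledges}. The only difference is notation.
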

\begin{proof} 
To fix the notation, assume $V(G)=\{v_1,\dots, v_n\}$. Then $V(G\circ P_2)= \{v_{1},\dots, v_{3n}\}$ and $E(G\circ P_2)=\{v_iv_{n+i}, v_{n+i}v_{2n+i}~|~1\leq i \leq n \}\cup E(G) $. Then $V(M(G\circ P_2))=V(G\circ P_2)\cup \mathcal{M}$, where $\mathcal{M}=\{ m_{i(n+i)},m_{(n+i)(2n+i)}~|~1\leq i \leq n \}\cup \{ m_{ij}~|~v_iv_j\in  E(G)\}$.

Let $S$ be a total dominating set of $M(G\circ P_2)$. By Lemma~\ref{lemma:totdominationisalledges}, we can assume that $S\subseteq\mathcal{M}$. 
Since $N_{M(G\circ P_2)}(v_{2n+i})=\{m_{(n+i)(2n+i)}\}$, for every $1\leq i\leq n$, we have $m_{(n+i)(2n+i)}\in S$ for every $1\leq i\leq n$. In addition, $N_{M(G\circ P_2)}(m_{(n+i)(2n+i)})=\{m_{i(n+i)},v_{2n+i},v_{n+i}\}$, for every $1\leq i\leq n$, implies that $m_{i(n+i)}\in S$, for every $1\leq i\leq n$. This implies that $|S|\ge 2n$, and hence $\gamma_t(M(G\circ P_2))\ge2n$.

On the other hand, if we consider $S=\{ m_{i(n+i)},m_{(n+i)(2n+i)}~|~1\leq i \leq n \}$, then $S$ is a total dominating set of $M(G\circ P_2)$ with $|S|=2n$, and hence $\gamma_t(M(G\circ P_2))\le2n$.
\end{proof}

\begin{Definition}
The \emph{join} $G+ H$ of two graphs $G$ and $H$ is the graph with vertex set $V(G+H)=V(G)\cup V(H)$ and edge set $E(G+H)=E(G)\cup E(H)\cup \{vw~|~v\in V(G), w\in V(H)\}$.
\end{Definition}

\begin{Example} Consider the graphs $G=K_3$ and $H=P_2$, then graph $G+H$  is the one in Figure~\ref{fig:k3plusp2}.
 \end{Example}
 
 \begin{figure}[th]
\centering
\begin{tikzpicture}
\tikzstyle{v} = [circle, draw, inner sep=2pt, minimum size=3pt, fill=black]
\draw (-1,0.5) node[v](1){}; 
\draw (1,0.5) node[v](2){}; 
\draw (0,0) node[v](3){}; 
\draw (-0.5,-0.5) node[v](4){};
\draw (0.5,-0.5) node[v](5){}; 
\draw (1)--(2);
\draw (3)--(4)--(5)--(3);
\draw (1)--(3);
\draw (1)--(4);
\draw (1)--(5);
\draw (2)--(3);
\draw (2)--(4);
\draw (2)--(5);
\end{tikzpicture}
\caption{The graph $K_3+P_2$ }\label{fig:k3plusp2}
\end{figure}
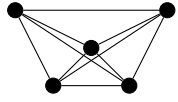

\begin{Theorem}\label{theo:mindominjoinpbig}
For any connected graph $G$ of order $n\geq 2$, 
$$\gamma_t(M(G+\overline{K_p}))= 
\begin{cases}
p & \text{ if $p\geq 2n$}\\ 
\lceil\frac{2(n+p)}{3}\rceil & \text{ if  $ \frac{n}{2}\le p\le 2n-1$.} 
\end{cases}$$
\end{Theorem}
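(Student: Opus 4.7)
The plan is to relate $M(G + \overline{K_p})$ to the complete bipartite middle graph $M(K_{n,p})$, whose total domination number is already given by Proposition~\ref{prop:mintotdomincompletebipartitegr}. I fix the notation $V(G) = \{v_1, \ldots, v_n\}$ and $V(\overline{K_p}) = \{u_1, \ldots, u_p\}$, writing $\tilde{m}_{ij}$ for the join-edge $v_iu_j$ and $m_{ij}$ for the edges of $G$. Since $K_{n,p}$ is a spanning subgraph of $G + \overline{K_p}$, the middle graph $M(G+\overline{K_p})$ is obtained from $M(K_{n,p})$ by adjoining the vertices $\{m_{ij}\mid v_iv_j \in E(G)\}$ together with the middle-graph adjacencies they induce.

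For the upper bound, I would take a minimum total dominating set $S$ of $M(K_{n,p})$, which by Lemma~\ref{lemma:totdominationisalledges} may be assumed to consist of edges, i.e.\ $S \subseteq \{\tilde{m}_{ij}\mid 1\le i\le n,\,1\le j\le p\}$. The key observation is that any such $S$ automatically contains an edge incident to every $v_i$: inside $M(K_{n,p})$ the open neighborhood of $v_i$ is exactly $\{\tilde{m}_{i1}, \ldots, \tilde{m}_{ip}\}$, and $S$ must meet it. Viewed inside $M(G+\overline{K_p})$, this $S$ still dominates every vertex of $M(K_{n,p})$ (neighborhoods only grow), and it also dominates every new vertex $m_{ij}$, because $m_{ij}$ is adjacent to each $\tilde{m}_{i,k}$ sharing $v_i$ and $S$ contains at least one such edge. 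Hence $\gamma_t(M(G+\overline{K_p})) \le \gamma_t(M(K_{n,p}))$, and invoking Proposition~\ref{prop:mintotdomincompletebipartitegr} gives the claimed upper bounds once one checks that its formula reduces to $p$ when $p \ge 2n$ and to $\lceil 2(n+p)/3 \rceil$ when $n/2 \le p \le 2n-1$ (unifying the two sub-cases $n\le p$ and $n>p$).

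For the lower bound I would split by range. When $n/2 \le p \le 2n-1$, the graph $G + \overline{K_p}$ has order $n+p$, and Theorem~\ref{theo:lowerboundtotdomin} immediately yields $\gamma_t(M(G+\overline{K_p})) \ge \lceil 2(n+p)/3 \rceil$. When $p \ge 2n$ this general bound is too weak (it is at most $p$), so a direct argument is needed: for every $j$, the open neighborhood of $u_j$ in $M(G+\overline{K_p})$ is $\{\tilde{m}_{1j}, \ldots, \tilde{m}_{nj}\}$, and these $p$ sets (one for each $j$) are pairwise disjoint. Any total dominating set has to meet each of them, forcing $|S| \ge p$.

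The main obstacle is the upper bound argument; specifically, realizing that the extra middle-graph vertices $m_{ij}$ coming from $E(G)$ are dominated \emph{for free} by any TDS of $M(K_{n,p})$, since such a TDS necessarily has an edge incident to every $v_i$. The rest is essentially bookkeeping: a short arithmetic check to reconcile the bipartite formula with $\lceil 2(n+p)/3\rceil$ and $p$ in the two ranges, plus isolating any edge cases (such as $p=1$, where $K_{n,1}$ is a star and one appeals to Proposition~\ref{prop:mintotdominstar} instead).
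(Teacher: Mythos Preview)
Your argument is correct and takes a genuinely different route from the paper's. The paper establishes the upper bound by an extended case analysis on the relation between $p$ and $n$ (six separate ranges: $p\ge 2n$, $p=2n-1$, $n+3\le p\le 2n-2$, $p=n+2$, $n-1\le p\le n+1$, and $\frac{n}{2}\le p\le n-2$), in each case exhibiting an explicit total dominating set built from paths or ad hoc edge selections. You instead observe once and for all that $K_{n,p}$ is a spanning subgraph of $G+\overline{K_p}$, and that any edge-only TDS of $M(K_{n,p})$ must contain an edge incident to every $v_i$, hence automatically dominates the extra middle vertices $m_{ij}$ coming from $E(G)$; this gives $\gamma_t(M(G+\overline{K_p}))\le\gamma_t(M(K_{n,p}))$ and reduces the upper bound to the already-proved Proposition~\ref{prop:mintotdomincompletebipartitegr}. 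The lower bounds are the same in both approaches (Theorem~\ref{theo:lowerboundtotdomin} for $\frac{n}{2}\le p\le 2n-1$, and the disjoint-neighborhoods count for $p\ge 2n$). Your reduction is cleaner and avoids the paper's case split entirely; the paper's constructions, on the other hand, are self-contained and do not rely on the somewhat delicate proof of Proposition~\ref{prop:mintotdomincompletebipartitegr}. The arithmetic identification $n_2+\lceil\frac{2n_1-n_2}{3}\rceil=\lceil\frac{2(n_1+n_2)}{3}\rceil$ and the boundary case $p=n/2$ (where $K_{n,p}$ falls into the $n_2\ge 2n_1$ branch and gives $n=\lceil\frac{2(n+p)}{3}\rceil$) both check out.
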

\begin{proof}
To fix the notation, assume $V(G)=\{v_1,\dots,v_n\}$ and $V(\overline{K_p})=\{v_{n+1},\dots,v_{n+p}\}$. Then $V(M(G+\overline{K_p}))=V(G+\overline{K_p})\cup  \mathcal{M}_1\cup  \mathcal{M}_2$ where $\mathcal{M}_1= \{m_{ij}~|~v_iv_j\in  E(G)\}$ and $\mathcal{M}_2= \{m_{i(n+j)}~|~1\leq i \leq n, 1\leq j \leq p\}$.  

\textbf{Case {\boldmath$p\geq 2n$}.} Let $S$ be a total dominating set of $M(G+\overline{K_p})$. By Lemma~\ref{lemma:totdominationisalledges}, we can assume $S\subseteq \mathcal{M}_1\cup \mathcal{M}_2$. Since, if $j\ne k$, $N_{M(G+ \overline{K_p})}(v_{n+j})\cap N_{M(G+ \overline{K_p})}(v_{n+k})=\emptyset$, then for every $1\le j\le p$ there exists $1\le i\le n$ such that  $m_{i(n+j)}\in S$, and hence $|S|\ge p$. This implies that $\gamma_t(M(G+\overline{K_p}))\ge p$. On the other hand, if we consider $S=\{m_{i(n+i)},m_{i(2n+i)}~|~1\le i\le n\}\cup\{m_{1(3n+i)}~|~1\le i\le p-2n\}$, then $S$ is a total dominating set of $M(G+\overline{K_p})$ with $|S|=p$, and hence $\gamma_t(M(G+\overline{K_p}))\le p$.

\textbf{Case {\boldmath$p= 2n-1$}.} If we consider $S=\{m_{i(n+i)},m_{i(2n+i)}~|~1\le i\le n-1\}\cup\{m_{n(2n)},m_{n(2n-1)}\}$, then $S$ is a total dominating set of $M(G+\overline{K_{2n-1}})$ with $|S|=2n$, and hence $\gamma_t(M(G+\overline{K_{2n-1}}))\le 2n$. 
On the other hand, by Theorem~\ref{theo:lowerboundtotdomin}, $\gamma_t(M(G+\overline{K_{2n-1}}))\ge \lceil\frac{2(3n-1)}{3}\rceil =2n$, and hence $\gamma_t(M(G+\overline{K_{2n-1}}))= 2n=\lceil\frac{2(n+p)}{3}\rceil $.

\textbf{Case {\boldmath$n+3\le p\le 2n-2$}.} Assume that $p=n+k$ with $3\le k\le n-2$. The graph $G+\overline{K_p}$ has $k$ subgraphs isomorphic to $P_3$ and one subgraph isomorphic to $P_{2(n-k)}$ that are all disjoint. In fact, the $k$ subgraphs $(G+\overline{K_p})[v_1,v_{n+1},v_{2n+1}], \dots, (G+\overline{K_p})[v_k, v_{n+k},v_{2n+k}]$ are all isomorphic to $P_3$ and the subgraph $(G+\overline{K_p})[v_{k+1},\dots, v_{n},v_{n+k+1},\dots, v_{2n}]$ has a subgraph isomorphic to $P_{2(n-k)}$.
By Proposition~\ref{prop:mintotdominpath}, this implies that $\gamma_t(M(G+\overline{K_p}))\le 2k+\lceil\frac{2(2(n-k))}{3}\rceil=\lceil\frac{2(n+p)}{3}\rceil$. By Theorem~\ref{theo:lowerboundtotdomin}, we obtain the desired equality.

\textbf{Case {\boldmath$p= n+2$}.} If $n\equiv 0 \mod 3$, consider $$S=\{m_{1(n+1)},m_{1(n+2)},m_{2(n+3)},m_{3(n+3)},\dots, m_{(n-1)(2n)}, m_{n(2n)}, m_{n(2n+1)}, m_{n(2n+2)}\}.$$ Then $S$ is a total dominating set of $M(G+\overline{K_p})$ with $|S|=\lceil\frac{2(n+p)}{3}\rceil$. If $n\equiv 1 \mod 3$, consider $$S=\{m_{1(n+1)},m_{1(n+2)},m_{2(n+3)},m_{3(n+3)},\dots, m_{n(2n)}, m_{n(2n+1)}, m_{n(2n+2)}\}.$$ Then $S$ is a total dominating set of $M(G+\overline{K_p})$ with $|S|=\lceil\frac{2(n+p)}{3}\rceil$. If $n\equiv 2 \mod 3$, consider $$S=\{m_{1(n+1)},m_{1(n+2)},m_{2(n+3)},m_{3(n+3)},\dots, m_{n(2n+1)}, m_{n(2n+2)}\}.$$ Then $S$ is a total dominating set of $M(G+\overline{K_p})$ with $|S|=\lceil\frac{2(n+p)}{3}\rceil$. This implies that $\gamma_t(M(G+\overline{K_p}))\le \lceil\frac{2(n+p)}{3}\rceil$. By Theorem~\ref{theo:lowerboundtotdomin}, we then obtain that $\gamma_t(M(G+\overline{K_p}))= \lceil\frac{2(n+p)}{3}\rceil$.

\textbf{Case {\boldmath$n-1\le p\le n+1$}.} If $p=n-1$, then the graph $G+\overline{K_p}$ contains the path $P: v_1v_{n+1}v_2v_{n+2}\cdots v_{n+p}v_n$. If $p=n$, then $G+\overline{K_p}$ contains the path $P': v_1v_{n+1}v_2v_{n+2}\cdots v_{n+p-1}v_nv_{n+p}$. If $p=n+1$, then $G+\overline{K_p}$ contains the path $P'': v_{n+1}v_1v_{n+2}v_2v_{n+3}\cdots v_{n+p-1}v_nv_{n+p}$. Since the paths $P, P'$ and $P''$ are all isomorphic to $P_{n+p}$, we can apply Theorem~\ref{prop:graphwithapath}, and obtain that $\gamma_t(M(G+\overline{K_p}))= \lceil\frac{2(n+p)}{3}\rceil$.

\textbf{Case {\boldmath$\frac{n}{2}\le p\le n-2$}.} Assume that $p=n-k$ with $2\le k\le \frac{n}{2}$. If $n$ is even and $p=\frac{n}{2}$ (or equivalently $k=\frac{n}{2}$), then the set $S=\{m_{i(n+i)}, m_{(i+\frac{n}{2})(n+i)}~|~1\le i\le \frac{n}{2}\}$ is a total dominating set of $M(G+\overline{K_p})$ with $|S|=n=\lceil\frac{2(n+p)}{3}\rceil$. This implies that $\gamma_t(M(G+\overline{K_p}))\le \lceil\frac{2(n+p)}{3}\rceil$, and by Theorem~\ref{theo:lowerboundtotdomin}, we obtain the desired equality.

Assume that $2\le k\le \frac{n}{2}-1$. The graph $G+\overline{K_p}$ has $k$ subgraphs isomorphic to $P_3$ and one subgraph isomorphic to $P_{2(n-2k)}$ that are all disjoint. In fact, the $k$ induced subgraphs $(G+\overline{K_p})[v_1,v_{n+1},v_{k+1}], \dots, (G+\overline{K_p})[v_k, v_{n+k},v_{2k}]$ are all isomorphic to $P_3$ and the induced subgraph $(G+\overline{K_p})[v_{2k+1},\dots, v_{n},v_{n+k+1},\dots, v_{2n-k}]$ has a subgraph isomorphic to $P_{2(n-2k)}$.
By Proposition~\ref{prop:mintotdominpath}, this implies that $\gamma_t(M(G+\overline{K_p}))\le 2k+\lceil\frac{2(2(n-2k))}{3}\rceil=\lceil\frac{2(n+p)}{3}\rceil$. By Theorem~\ref{theo:lowerboundtotdomin}, we obtain the desired equality.
\end{proof}

Similarly to \cite{dominmiddle}, when $p$ is small relatively to $n$, $\gamma_t(M(G+\overline{K_p}))$ is strongly related to $\gamma_t(M(G))$.

\begin{Theorem}\label{theo:mindominjoinverysmallrange}
For any connected graph $G$ of order $n\geq 2$ and any integer $1\le p\le \frac{n}{2}-1$, 
$$\lceil\frac{2(n+p)}{3}\rceil\le \gamma_t(M(G+\overline{K_p}))\le$$ $$2p+\min\{\gamma_t(M(G[A]))~|~A\subseteq V(G), $$ $$|A|=n-2p, G[A] \text{ has no isolated vertices}\}.$$
\end{Theorem}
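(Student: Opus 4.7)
The plan is to prove the lower bound as an immediate consequence of Theorem~\ref{theo:lowerboundtotdomin}, and to establish the upper bound by constructing an explicit total dominating set of $M(G+\overline{K_p})$ of the claimed size for any admissible $A$.

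For the lower bound, the graph $G+\overline{K_p}$ has order $n+p$, so Theorem~\ref{theo:lowerboundtotdomin} yields $\gamma_t(M(G+\overline{K_p}))\ge \lceil\tfrac{2(n+p)}{3}\rceil$ with no further work.

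For the upper bound, fix any $A\subseteq V(G)$ with $|A|=n-2p$ such that $G[A]$ has no isolated vertices (admissibility of $A$ is needed for $\gamma_t(M(G[A]))$ to be defined). Label $V(G)\setminus A=\{u_1,u'_1,u_2,u'_2,\dots,u_p,u'_p\}$ by arbitrarily pairing the $2p$ excluded vertices, and set
$$S_1=\{m_{u_i(n+i)},\, m_{u'_i(n+i)}~|~1\le i\le p\}, \qquad S_2 \text{ a minimum total dominating set of } M(G[A]).$$
The candidate is $S=S_1\cup S_2$, which has cardinality $2p+\gamma_t(M(G[A]))$. The verification that $S$ is a total dominating set of $M(G+\overline{K_p})$ proceeds by cases on the type of vertex $x\in V(M(G+\overline{K_p}))$: (i) if $x=v_{n+j}$, then $m_{u_j(n+j)}\in S_1$ is a neighbor; (ii) if $x\in V(G)\setminus A$, say $x=u_j$ or $x=u'_j$, then the corresponding $m_{u_j(n+j)}$ or $m_{u'_j(n+j)}$ works; (iii) if $x\in A$ or $x=m_{ab}$ with both $a,b\in A$, then $x\in V(M(G[A]))$ is totally dominated by $S_2$; (iv) if $x=m_{ab}$ is an edge of $G$ with some endpoint $v_a\in V(G)\setminus A$, then $x$ shares $v_a$ with the corresponding element of $S_1$ and is adjacent to it in $M(G+\overline{K_p})$; (v) if $x=m_{a(n+k)}$ with $v_a\in V(G)\setminus A$, the argument is analogous to (iv), while if $v_a\in A$ then $x$ is adjacent to $m_{u_k(n+k)}\in S_1$ via the shared vertex $v_{n+k}$; (vi) finally, each element $m_{u_i(n+i)}$ of $S_1$ has its partner $m_{u'_i(n+i)}\in S_1$ as a neighbor, and each element of $S_2$ is totally dominated inside $M(G[A])$.

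Taking the minimum over all admissible $A$ gives the claimed upper bound. The main subtlety—essentially the only non-routine point—is checking case (v) above: the edges $m_{a(n+k)}$ for $v_a\in A$ are not dominated by $S_2$ (they do not live in $M(G[A])$), but they are rescued precisely because the two selected edges $m_{u_k(n+k)},m_{u'_k(n+k)}\in S_1$ at each vertex $v_{n+k}$ dominate every edge incident to $v_{n+k}$ in $M(G+\overline{K_p})$. This is the structural reason for pairing two vertices per $v_{n+k}$ rather than one, and also why $|S_1|=2p$ is the natural cost.
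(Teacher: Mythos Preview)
Your proof is correct and follows exactly the paper's approach: the lower bound via Theorem~\ref{theo:lowerboundtotdomin}, and the upper bound via the same construction $S=S_2\cup\{m_{u_i(n+i)},m_{u'_i(n+i)}\mid 1\le i\le p\}$ after pairing the $2p$ vertices outside $A$. The paper merely asserts that this $S$ is a total dominating set, whereas you supply the case-by-case verification; the underlying idea is identical.
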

\begin{proof} To fix the notation, assume $V(G)=\{v_1,\dots,v_n\}$ and $V(\overline{K_p})=\{v_{n+1},\dots,v_{n+p}\}$. Then $V(M(G+\overline{K_{p}}))=V(G+\overline{K_{p}})\cup  \mathcal{M}_1\cup  \mathcal{M}_2$ where $\mathcal{M}_1= \{m_{ij}~|~v_iv_j\in  E(G)\}$ and $\mathcal{M}_2= \{m_{i(n+j)}~|~1\leq i \leq n, 1\leq j \leq p\}$.

By Theorem~\ref{theo:lowerboundtotdomin}, we obtain the first inequality. Let now $A\subseteq V(G)$ be such that $|A|=n-2p$ and $G[A]$ has no isolated vertices. Without loss of generalities, we can assume that $A=\{v_{2p+1},\dots, v_{n}\}$. Consider $S'$ be a minimal total dominating set of $M(G[A])$, then $S=S'\cup \{m_{i(n+i)},m_{(p+i)(n+i)}~|~1\le i\le p\}$ is a total dominating set of $M(G+\overline{K_p})$. Since this arguments works for every $A\subseteq V(G)$ such that $|A|=n-2p$ and $G[A]$ has no isolated vertices, we obtain the second inequality.

\end{proof}

If we apply Lemma~\ref{lemma:totdominationdeletionvertex} to the graph $G+\overline{K_1}$, we obtain the following result.
\begin{Lemma}\label{lemma:plusonevertextotaldomin} Let $G$ be a graph of order $n\geq 2$ with no isolated vertices. Then  
$$\gamma_t(M(G))\le \gamma_t(M(G+\overline{K_1}))\le \gamma_t(M(G))+1.$$
\end{Lemma}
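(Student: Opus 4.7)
The plan is to apply Lemma~\ref{lemma:totdominationdeletionvertex} directly to the graph $H := G + \overline{K_1}$, choosing as the distinguished vertex the unique vertex $v$ coming from $\overline{K_1}$. The key observation is that $H \setminus v = G$, so the conclusion of that Lemma immediately rewrites as the desired double inequality $\gamma_t(M(G)) \le \gamma_t(M(H)) \le \gamma_t(M(G))+1$.

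To legitimately invoke Lemma~\ref{lemma:totdominationdeletionvertex} I need to verify its two hypotheses for $(H,v)$: that $H$ is connected of order at least $2$, and that $v$ is not adjacent to any vertex of degree $1$ in $H$. Connectedness of $H$ is automatic (and does not require $G$ to be connected), because by construction of the join $v$ is joined to every vertex of $G$, so $H$ has a universal vertex and is therefore connected; it has order $n+1 \geq 3$. For the second hypothesis, the neighbors of $v$ in $H$ are exactly the vertices of $G$, and each $v_i \in V(G)$ satisfies
$$d_H(v_i) \;=\; d_G(v_i) + 1 \;\ge\; 2,$$
because $G$ has no isolated vertices by hypothesis. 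Hence no neighbor of $v$ has degree $1$ in $H$.

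With both hypotheses in place, Lemma~\ref{lemma:totdominationdeletionvertex} applied to $H$ and $v$ yields
$$\gamma_t(M(H \setminus v)) \;\le\; \gamma_t(M(H)) \;\le\; \gamma_t(M(H \setminus v)) + 1,$$
and substituting $H \setminus v = G$ gives the claim.

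I do not foresee any real obstacle: the argument is a one-line reduction to the earlier Lemma. The only point worth flagging is that one must not demand connectedness of $G$ separately — it is irrelevant, since connectedness of $H$ follows from the join structure alone, which is exactly the flexibility that makes this Lemma a clean specialization of Lemma~\ref{lemma:totdominationdeletionvertex}.
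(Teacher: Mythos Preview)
Your proof is correct and is exactly the approach the paper takes: the paper simply states that the result follows by applying Lemma~\ref{lemma:totdominationdeletionvertex} to $G+\overline{K_1}$, and you have supplied the (straightforward) verification of its hypotheses. Your observation that connectedness of $G$ is not needed because the join $H$ is automatically connected is a nice clarification that the paper leaves implicit.
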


Notice that both inequalities described in Lemma~\ref{lemma:plusonevertextotaldomin} are sharp as the following examples show.
\begin{Example} Consider the graph $G=C_5$. Then $G+\overline{K_1}$ is isomorphic to $W_6$. This implies that by Corollary~\ref{corol:mintotdominfamily}, $\gamma_t(M(G))=4=\gamma_t(M(G+\overline{K_1}))$.
\end{Example}

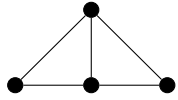
\begin{figure}[th]
\centering
\begin{tikzpicture}
\tikzstyle{v} = [circle, draw, inner sep=2pt, minimum size=3pt, fill=black]
\draw (0,0) node[v](1){}; 
\draw (-1,0) node[v](2){}; 
\draw (1,0) node[v](3){};
\draw (0,1) node[v](4){}; 
\draw (1)--(2)--(4)--(1);
\draw (4)--(3)--(1);
\end{tikzpicture}
\caption{The graph $P_3+\overline{K_1}$}\label{Fig:sumwithK1}
\end{figure}

\begin{Example} Consider the graph $G=P_3$. Then $G+\overline{K_1}$ is the graph in Figure~\ref{Fig:sumwithK1}. By Proposition~\ref{prop:mintotdominpath} and Theorem~\ref{prop:graphwithapath}, we have that $\gamma_t(M(P_3))=2$ and $\gamma_t(M(P_3+\overline{K_1}))=3$.
\end{Example}

\begin{Proposition}\label{prop:joinstargraphk1}
For any star graph $K_{1,n}$ on $n+1$ vertices, with $n\ge 4$, we have  
$$\gamma_t(M(K_{1,n}+\overline{K_1}))=n.$$
\end{Proposition}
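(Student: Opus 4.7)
The plan is to prove the two inequalities $\gamma_t(M(K_{1,n}+\overline{K_1}))\le n$ and $\gamma_t(M(K_{1,n}+\overline{K_1}))\ge n$ directly, by exhibiting a total dominating set of size $n$ and by a disjoint-neighborhood counting argument.

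First I would fix notation: let $V(K_{1,n})=\{v_0,v_1,\dots,v_n\}$ with $v_0$ the central vertex, and let $V(\overline{K_1})=\{u\}$. Then $E(K_{1,n}+\overline{K_1})=\{v_0v_i\mid 1\le i\le n\}\cup\{uv_i\mid 0\le i\le n\}$. Accordingly the vertex set of $M(K_{1,n}+\overline{K_1})$ is $V(K_{1,n}+\overline{K_1})\cup \mathcal{M}_1\cup\mathcal{M}_2$, where $\mathcal{M}_1=\{m_{0i}\mid 1\le i\le n\}$ records the star edges and $\mathcal{M}_2=\{m_i\mid 0\le i\le n\}$ records the edges $uv_i$.

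For the lower bound, observe that for each leaf $v_i$ of $K_{1,n}$ (with $1\le i\le n$), we have $N_{M(K_{1,n}+\overline{K_1})}(v_i)=\{m_{0i},m_i\}$, and these sets are pairwise disjoint as $i$ ranges over $\{1,\dots,n\}$. Any total dominating set $S$ must meet each such neighborhood, forcing $|S|\ge n$.

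For the upper bound I would exhibit the explicit set
$$S=\{m_{01},m_{02}\}\cup\{m_i\mid 3\le i\le n\},$$
which has $|S|=n$, and check that it totally dominates $M(K_{1,n}+\overline{K_1})$: the vertex $v_0$ is dominated by $m_{01}$; the vertex $u$ is dominated by any $m_i$ with $i\ge 3$ (there are at least two such elements since $n\ge 4$); the vertex $v_i$ with $i\in\{1,2\}$ is dominated by $m_{0i}\in S$; the vertex $v_i$ with $i\ge 3$ is dominated by $m_i\in S$; each $m_{0i}\in S$ ($i\in\{1,2\}$) is dominated by the other element of $\{m_{01},m_{02}\}\subseteq S$ (they share the endpoint $v_0$); and each $m_i\in S$ ($i\ge 3$) is dominated by any other $m_j\in S$ with $j\ge 3$, $j\ne i$, which exists because $n-2\ge 2$.

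The only subtle point is the last verification, which explains the hypothesis $n\ge 4$: one needs each of the two ``halves'' of $S$ (the $\mathcal{M}_1$-part and the $\mathcal{M}_2$-part) to contain at least two elements, so that the vertices of $S$ themselves receive a neighbor in $S$. This is the step where one could otherwise be tempted to use an extremal split like $|S\cap\mathcal{M}_1|=1$ or $|S\cap\mathcal{M}_2|=0$, both of which fail the total-domination condition; this is the main (mild) obstacle, and is handled exactly by the $2+(n-2)$ split above.
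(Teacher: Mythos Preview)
Your argument is correct in spirit and very close to the paper's. The paper also exhibits an explicit size-$n$ total dominating set, but with the roles reversed: it takes $n-2$ star edges together with two edges incident to the extra vertex, whereas you take two star edges together with $n-2$ edges incident to the extra vertex; these are symmetric and equally valid. For the lower bound the paper instead invokes Lemma~\ref{lemma:plusonevertextotaldomin} combined with Proposition~\ref{prop:mintotdominstar} to get $\gamma_t(M(K_{1,n}+\overline{K_1}))\ge\gamma_t(M(K_{1,n}))=n$; your disjoint-neighborhood count on the leaves $v_1,\dots,v_n$ is a more elementary and self-contained route that avoids appealing to those earlier results.

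There is, however, a small gap in your upper-bound verification: you check only the vertices of $V(K_{1,n}+\overline{K_1})$ and the elements of $S$ itself, but you omit the remaining $\mathcal{M}$-vertices, namely $m_{0i}$ for $3\le i\le n$ and $m_0,m_1,m_2$. These are all easily handled (each $m_{0i}$ with $i\ge3$ shares the endpoint $v_0$ with $m_{01}\in S$; each $m_i$ with $i\in\{0,1,2\}$ shares the endpoint $u$ with $m_3\in S$), but they must be included for the check to be complete.
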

\begin{proof} To fix the notation, assume $V(K_{1,n})=\{v_0,v_1,\dots, v_n\}$, $V(\overline{K_1})=\{v_{n+1}\}$ and $E(K_{1,n})=\{v_0v_1,v_0v_2,\dots, v_0v_n\}$. Then $V(M(K_{1,n}+\overline{K_1}))=V(K_{1,n})\cup \mathcal{M}$, where $\mathcal{M}=\{ m_i~|~1\leq i \leq n \}\cup\{m_{i(n+1)}~|~0\le i\le n\}$.

By Proposition~\ref{prop:mintotdominstar} and Lemma~\ref{lemma:plusonevertextotaldomin}, $\gamma_t(M(K_{1,n}+\overline{K_1}))\ge n$. On the other hand, consider $S=\{m_i~|~1\le i\le n-2\}\cup\{m_{(n-1)(n+1)},m_{n(n+1)}\}$ is a total dominating set of $M(K_{1,n}+\overline{K_1})$ with $|S|=n$, and hence $\gamma_t(M(K_{1,n}+\overline{K_1}))\le n$.
\end{proof}

\begin{Remark} Proposition~\ref{prop:joinstargraphk1} shows that the upper bound of Theorem~\ref{theo:mindominjoinverysmallrange} is sharp. In fact, if $A\subseteq V(K_{1,n})$ with $|A|=n-2$ and $G[A]$ has no isolated vertices, then $G[A]$ is isomorphic to $K_{1,n-2}$, and hence by Proposition~\ref{prop:mintotdominstar}, $\gamma_t(M(G[A]))=n-2$.
\end{Remark}

\begin{Proposition}\label{prop:withpathmintotdominsumKp} Let $G$ be a graph of order $n\ge 2$ and $1\le p\le \frac{n}{2}-1$. If $G$ has a subgraph isomorphic to a path graph $P_n$, then 
$$\gamma_t(M(G+\overline{K_p}))=\lceil\frac{2(n+p)}{3}\rceil.$$
\end{Proposition}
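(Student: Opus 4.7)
The plan is to apply Theorem~\ref{prop:graphwithapath} to the graph $G+\overline{K_p}$, which has order $n+p$. To do this, I must exhibit a subgraph of $G+\overline{K_p}$ isomorphic to $P_{n+p}$, i.e., a Hamiltonian path. Once this is established, Theorem~\ref{prop:graphwithapath} immediately yields $\gamma_t(M(G+\overline{K_p}))=\lceil\frac{2(n+p)}{3}\rceil$.

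To fix notation, label the vertices of the $P_n$ subgraph of $G$ as $v_1,v_2,\dots,v_n$ with $v_iv_{i+1}\in E(G)$ for $1\le i\le n-1$, and denote $V(\overline{K_p})=\{u_1,\dots,u_p\}$. Since in $G+\overline{K_p}$ every $u_j$ is adjacent to every $v_i$ by definition of the join, I would construct the path
$$Q:\ u_1\,v_1\,u_2\,v_2\,\cdots\,u_p\,v_p\,v_{p+1}\,v_{p+2}\,\cdots\,v_n.$$
The edges $u_jv_j$ and $v_ju_{j+1}$ for $1\le j\le p$ belong to $E(G+\overline{K_p})$ thanks to the join structure, while the edges $v_iv_{i+1}$ for $p\le i\le n-1$ are inherited from the $P_n$ subgraph of $G$. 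The assumption $p\le \frac{n}{2}-1$ guarantees $2p\le n-2<n$, so the interleaving $u_1v_1\cdots u_pv_p$ uses distinct vertices and there remains at least one path vertex $v_{p+1}$ to continue through to $v_n$. Thus $Q$ is a simple path on $p+p+(n-p)=n+p$ vertices, so $G+\overline{K_p}$ contains a subgraph isomorphic to $P_{n+p}$.

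Applying Theorem~\ref{prop:graphwithapath} to $G+\overline{K_p}$ (which has order $n+p$ and contains $P_{n+p}$) yields the desired equality $\gamma_t(M(G+\overline{K_p}))=\lceil\frac{2(n+p)}{3}\rceil$. As a sanity check, the matching lower bound is also furnished directly by the first inequality of Theorem~\ref{theo:mindominjoinverysmallrange} (itself a consequence of Theorem~\ref{theo:lowerboundtotdomin}), so the argument is internally consistent. The only nontrivial step is the path construction, and its main subtlety is verifying that the numerical hypothesis $p\le\frac{n}{2}-1$ suffices for the interleaving to be valid and to leave a nonempty tail $v_{p+1}\cdots v_n$; this is immediate from the inequality.
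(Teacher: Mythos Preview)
Your proof is correct and follows exactly the same approach as the paper: show that $G+\overline{K_p}$ contains a spanning path $P_{n+p}$ and then invoke Theorem~\ref{prop:graphwithapath}. The paper's own proof is even terser---it simply asserts that such a $P_{n+p}$ exists without writing it down---so your explicit interleaved construction $u_1v_1u_2v_2\cdots u_pv_pv_{p+1}\cdots v_n$ only adds detail, not a different idea (minor quibble: the edges $v_ju_{j+1}$ should be indexed $1\le j\le p-1$, and the construction actually works for any $p\le n$, so the hypothesis $p\le\frac{n}{2}-1$ is stronger than what your path needs).
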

\begin{proof} Under our assumption, the graph $G+\overline{K_p}$ contains a subgraph isomorphic to $P_{n+p}$. By Theorem~\ref{prop:graphwithapath}, this implies that $\gamma_t(M(G+\overline{K_p}))=\lceil\frac{2(n+p)}{3}\rceil.$
\end{proof}

As a direct consequence of Proposition~\ref{prop:withpathmintotdominsumKp}, we obtain the following result.

\begin{Corollary}\label{cor:knownfamilymintotdominsumKp} Let $G$ be a graph of order $n\ge 2$ and $1\le p\le \frac{n}{2}-1$. If $G$ is isomorphic to a path graph $P_n$, or a cycle graph $C_n$, or a wheel graph $W_n$, or a complete graph $K_n$, then 
$$\gamma_t(M(G+\overline{K_p}))=\lceil\frac{2(n+p)}{3}\rceil.$$
\end{Corollary}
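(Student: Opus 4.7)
The plan is to reduce this corollary directly to Proposition~\ref{prop:withpathmintotdominsumKp}, whose hypothesis is precisely that $G$ contains a subgraph isomorphic to $P_n$. So the only thing to verify is that each of the four listed families $P_n$, $C_n$, $W_n$, $K_n$ admits a Hamiltonian path, i.e. a spanning subgraph isomorphic to $P_n$. This is essentially a one-line check per family.

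First I would observe the trivial case: $P_n$ itself is a path of order $n$, so it contains $P_n$ as a subgraph. For the cycle graph $C_n$, removing any single edge from the cycle yields a spanning path on the $n$ vertices, hence a copy of $P_n$. For the complete graph $K_n$, any permutation $v_{i_1}, v_{i_2}, \dots, v_{i_n}$ of the vertices defines a Hamiltonian path, since all consecutive pairs are edges; so $K_n$ contains $P_n$. For the wheel graph $W_n$, recall that $W_n$ is obtained by joining a single hub vertex to every vertex of a cycle $C_{n-1}$; one may take a Hamiltonian path by traversing all but one vertex of the outer cycle and then moving through the hub to reach the remaining cycle vertex, which gives a spanning path of length $n-1$. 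Thus each of the four graphs contains $P_n$ as a subgraph.

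Having verified the hypothesis of Proposition~\ref{prop:withpathmintotdominsumKp} in each case, and since $1\le p\le \frac{n}{2}-1$ by assumption, we conclude immediately that
\[
\gamma_t(M(G+\overline{K_p}))=\left\lceil\tfrac{2(n+p)}{3}\right\rceil.
\]
There is no real obstacle here: the entire content of the corollary is the Hamiltonicity observation for each family, which in each case is elementary.
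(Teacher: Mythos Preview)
Your proof is correct and is exactly the approach the paper intends: it states the corollary as a direct consequence of Proposition~\ref{prop:withpathmintotdominsumKp} without further argument, and your Hamiltonian-path check for each family simply makes that reduction explicit.
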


\section{Nordhaus-Gaddum relations}

In \cite{Nordhaus}, Nordhaus and Gaddum gave a lower bound and an upper bound, in terms of the order of the graph, on the sum and the product of the chromatic number of a graph and its complement. 
Since then, lower and upper bounds on the sum and the product of many other graph invariants, like domination and total domination numbers, have been proposed by several authors. See \cite{Aouchiche13} for a survey on the subject.

\begin{Theorem}\label{theo:Nordgadtotdomin}
Let $G$ be a graph on $n\ge2$ vertices. Assume that the graphs $G$ and $\overline{G}$ have no isolated vertices and no components isomorphic to $K_2$. Then
$$2(n-1)\ge \gamma_t(M(G))+\gamma_t(M(\overline{G}))\ge 2\lceil\frac{2n}{3}\rceil $$
and
$$(n-1)^2\ge \gamma_t(M(G))\cdot\gamma_t(M(\overline{G}))\ge (\lceil\frac{2n}{3}\rceil)^2. $$
\end{Theorem}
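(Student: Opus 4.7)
The plan is to apply Theorem~\ref{theo:lowerboundtotdomin} to both $G$ and $\overline{G}$ and then combine the resulting chains of inequalities. The hypotheses on $G$ and $\overline{G}$ (no isolated vertices and no components isomorphic to $K_2$) are exactly what is needed to make Theorem~\ref{theo:lowerboundtotdomin} applicable to each graph: the absence of isolated vertices ensures that $M(G)$ and $M(\overline{G})$ themselves have no isolated vertices, so that $\gamma_t(M(G))$ and $\gamma_t(M(\overline{G}))$ are well defined; and excluding $K_2$ components guarantees that every connected component of $G$ and of $\overline{G}$ has at least three vertices, which is what the spanning-tree argument underlying the upper bound of Theorem~\ref{theo:lowerboundtotdomin} requires in order to yield $n-1$ (for a $K_2$ component a spanning tree is a single edge, whose edge-vertex has no neighbor in $E(G)$ within the middle graph, so the bound fails).

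First I would invoke Theorem~\ref{theo:lowerboundtotdomin} separately for $G$ and for $\overline{G}$ to obtain
$$\lceil 2n/3 \rceil \le \gamma_t(M(G)) \le n-1 \quad \text{and} \quad \lceil 2n/3 \rceil \le \gamma_t(M(\overline{G})) \le n-1.$$
The sum bounds then follow immediately by adding these inequalities term-by-term, giving $2\lceil 2n/3 \rceil \le \gamma_t(M(G))+\gamma_t(M(\overline{G})) \le 2(n-1)$. Since all four quantities involved are strictly positive (indeed $\lceil 2n/3 \rceil \ge 2$ when $n\ge 2$), the product bounds $(\lceil 2n/3 \rceil)^2 \le \gamma_t(M(G))\cdot\gamma_t(M(\overline{G})) \le (n-1)^2$ are obtained by multiplying the two chains of inequalities.

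The main (and essentially the only) obstacle is verifying that the hypotheses of Theorem~\ref{theo:lowerboundtotdomin} are met by both $G$ and $\overline{G}$ simultaneously; once this is confirmed, the four Nordhaus--Gaddum inequalities reduce to elementary arithmetic manipulations of the two per-graph bounds, and no further combinatorial input is required.
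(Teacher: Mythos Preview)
Your proposal is correct and follows essentially the same approach as the paper: both arguments simply invoke Theorem~\ref{theo:lowerboundtotdomin} for $G$ and for $\overline{G}$ separately and then add or multiply the resulting inequalities. The paper's proof is in fact terser---it notes that Theorem~\ref{theo:lowerboundtotdomin} is applied to each component---while your version spells out more explicitly why the hypotheses on isolated vertices and $K_2$ components are needed; but the strategy is identical.
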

\begin{proof} 
By applying Theorem~\ref{theo:lowerboundtotdomin} to each component of $G$ and $\overline{G}$, we obtain that $n-1\ge\gamma_t(M(G))\ge \lceil\frac{2n}{3}\rceil $ and $n-1\ge\gamma_t(M(\overline{G}))\ge \lceil\frac{2n}{3}\rceil$.
\end{proof}

\begin{Remark} If in Theorem~\ref{theo:Nordgadtotdomin} we allow $G$ or $\overline{G}$ to have components isomorphic to $K_2$, then the described upper bounds might not work. To see this it is enough to consider the graph $C_4$. In fact, $\overline{C_4}$ consists of two copies of $K_2$, and then $\gamma_t(M(C_4))=3$ and $\gamma_t(M(\overline{C_4}))=4$.
\end{Remark}

Notice that all the inequalities of Theorem~\ref{theo:Nordgadtotdomin} are sharp, in fact we have the following example.

\begin{Example}
Consider the graph $P_4$, then by Proposition~\ref{prop:mintotdominpath}, we have $\gamma_t(M(P_4))=3$. On the other hand, $\overline{P_4}$ is isomorphic to $P_4$, and hence $\gamma_t(M(\overline{P_4}))=3$. Since $n=4$, then $6=\gamma_t(M(P_4))+\gamma_t(M(\overline{P_4}))=2(n-1)=2\lceil\frac{2n}{3}\rceil$, and $9=\gamma_t(M(P_4))\cdot\gamma_t(M(\overline{P_4}))=(n-1)^2=(\lceil\frac{2n}{3}\rceil)^2$.
 \end{Example}
\paragraph{\textbf{Acknowledgements}} During the preparation of this article the fourth author was supported by JSPS Grant-in-Aid for Early-Career Scientists (19K14493).


\end{document}